\newtheorem{thm}{Theorem}
\theoremstyle{plain}
\newtheorem{prop}[thm]{Proposition}
\newcounter{Propcount}
\newproof{pf}{Proof}
\def\BState{\State\hskip-\ALG@thistlm}
\begin{document}

\begin{frontmatter}

\title{Approximate dynamic programming for profit estimation of connected hydro reservoirs}

\author[label1]{Farzaneh Pourahmadi\renewcommand*{\thefootnote}{$*$}\footnote{Corresponding author.\\ \emph{E-mail addresses:} fp@math.ku.dk}}
\author[label1]{Trine Krogh Boomsma\renewcommand*{\thefootnote}{}}

\address[label1]{Department of Mathematical Sciences, University of Copenhagen, Universitetsparken 5, 2100 K\o benhavn \O, Denmark}

\begin{abstract}
In this paper, we study the operational problem of connected hydro power reservoirs which involves sequential decision-making in an uncertain and dynamic environment. The problem is traditionally formulated as a stochastic dynamic program accounting for the uncertainty of electricity prices and reservoir inflows. This formulation suffers from the curse of dimensionality, as the state space explodes with the number of reservoirs and the history of prices and inflows. To avoid computing the expectation of future value functions, the proposed model takes advantage of the so-called post-decision state. To further tackle the dimensionality issue, we propose an approximate dynamic programming approach that estimates the future value of water using a linear approximation architecture. When the time series of prices and inflows follow autoregressive processes, our approximation provides an upper bound on the future value function. We use an offline training algorithm based on the historical data of prices and inflows and run both in-sample and out-of-sample simulations. Two realistic test systems of cascade and network connected reservoirs serve to demonstrate the computational tractability of our approach. In particular, we provide numerical evidence of convergence and quality of solutions. For our test systems, our results show that profit estimation is improved by 20\% when including inflows in the linear approximation. 
\end{abstract}

\begin{keyword}
Dynamic programming, Connected hydro reservoirs, Profit estimation

\end{keyword}

\end{frontmatter}
  
\section{Introduction}

With the rising penetration of renewable resources in many power systems, hydro-power plants are playing an increasingly important role as large-scale flexible units. The operation of hydro-power systems is a complex stochastic and dynamic optimization problem, involving sequential decisions under uncertainty. The coordination of water releases from multiple connected reservoirs over time may pose serious challenges. The modeling of uncertainty in electricity prices and external water inflows further increases complexity. 

The existing literature typically describes the operational problem of a hydro-power plant as a multi-stage stochastic program, using a scenario tree to characterize uncertainty \cite{Trine,Trine2,stochastic}. A drawback of this approach is the large number of scenarios required to accurately represent the distribution of uncertainty. The size of the scenario tree, however, increases exponentially with the number of stages, which may result in computational intractability. An alternative to decision-making under uncertainty is multi-stage robust optimization. This method determines an optimal solution with respect to the worst-case realization of an uncertainty set \cite{robust1,robust2}, possibly producing an overly conservative solution. Furthermore, it is non-trivial to construct an uncertainty set that includes all potential distributions in a multi-dimensional space.

A different technique for sequential decision-making is based on dynamic programming, using the Bellman equations \cite{Bellman}. With the introduction of state variables, the principle of Bellman allows the hydro-power problem to be solved recursively. Often, the time horizon consists of a finite number of time periods, referred to as stages, and the state space is discretized into finite number of values for each variable and in each time period. The value of being in a state includes the immediate return of the current state and the expected future value, also called profit-to-go \cite{Dimitri}. Applying this methodology, the optimization problem decomposes into stage-wise sub problems and computational complexity scales linearly with the number of stages \cite{Bellman,Dimitri}. Nevertheless, the Bellman equations for hydro-power operation may not be solved to optimality, as the state space easily explodes with the number of reservoirs and the history of prices and inflows.

Approximate dynamic programming (ADP) offers various strategies to overcoming the curse of dimensionality such as simulation of the state space and approximation of the future value function \cite{Powelbook,labadi}. For the majority of the literature in this domain, approximation relies on the discrete representation of the state space \cite{Reliability,discrete,discrete2}. This type of approximation may be inaccurate or intractable for large-scale problems with a substantial number of states. Other ADP algorithms are based on linear and non-linear approximations of the value function. One of the most widely used approaches, the neural network framework, deploys a complex nonlinear function, which generally does not provide any optimality guarantee and interpretability \cite{DimitriNeuro}. Other types of non-linear approximations \cite{Powel1,Powel2} suffer from similar lack of guarantees. In contrast, linear approximations may produce linear programming sub problems that can be solved to optimality. A way to obtain a piece-wise linear approximation in value space is by stochastic dual dynamic programming \cite{Nils,Benjamin,Philip,Flach}. This method approximates a convex future value function by a collection of supporting hyperplanes, representing an outer approximation. However, an accurately estimate of the value function may require many hyperplanes. Also, obtaining these hyperplanes requires both a forward and backward pass in the algorithm. Compared to piece-wise linear approximations, e.g.\ obtained by duality, the use of a linear approximation architecture is less computationally expensive and learning may be obtained only by a forward pass of an algorithm.  

To overcome the aforementioned challenges, we propose a novel and tractable ADP framework for operation of connected hydro reservoirs. We address what and how to learn from historical data to accurately estimate future profit and make sequential decisions under uncertainty. Decisions relate to the amounts of water released from multiple connected reservoirs, and states include the reservoir level, current and past electricity prices and inflows. Our model exploits a powerful strategy based on the so-called the post-decision state to avoid the computation of the expectation in the Bellman equations. The post-decision state captures the state of the system immediately after making a decision but before any new exogenous information arrives. To further tackle the curse of dimensionality, we replace the future value of water by a linear approximation learnt from samples of random prices and inflows. Firstly, the linearity of the value function allows for the stage-wise sub problems to be solved as linear programs with an optimality guarantee. Secondly, the linear approach easily generalizes to more advanced modeling of reservoir operation by including additional linear constraints on feasible decisions. We show that in case the time series of prices and inflows follow an autoregressive process, the approximation provides an upper bound on future profits. On this basis, we propose an offline learning process to train an online model. The framework may be used to assist the decision-making of reservoir owners participating in the wholesale market. 

We assess the performance of our model using both in-sample and out-of-sample simulations. We provide numerical evidence of convergence and quality of solutions for two realistic case studies. In particular, we establish convergence of the value function towards its true value for the deterministic problem. For the stochastic problem, the function converges in the sense that variations in its value decrease with the number of samples. Considering the optimal solutions, we confirm that when the price is low, water is stored such that when the price is higher, the hydro plants generate electricity. To assess solution quality, we compare in-sample and out-of-sample values, finding a difference of less than 2\%. To further evaluation, we compare to the so-called wait-and-see solutions, revealing an estimated value of perfect information of less than 10\%. Most importantly, our results demonstrate that accurate estimation of the future profit depends on not only the current reservoir level but also on the estimation of future inflows. For our test systems, our test cases show that profit estimation is improved by 20\% when including inflows in the linear approximation.

The remainder of this paper is organized as follows. Section \ref{2} provides the model for hydro reservoir operation and formulates it as a stochastic dynamic program which is reformulated and approximated in Sections \ref{3} and \ref{4}, respectively. Section \ref{5} describes an offline algorithm for training the approximated model. Sections \ref{6} and \ref{7} present numerical results for two realistic case studies. Finally, Section \ref{8} concludes the paper.  

\section{Modeling hydro reservoir operation} \label{2}

A hydro power plant consists of multiple interconnected reservoirs. Operational flexibility implies that water can be released from elevated reservoirs and led through a power station with a number of turbines, converting its potential energy into power, at times of high demand for electricity. Likewise, the reservoirs can store natural water inflows or energy can be used to pump back water into the reservoirs at times of no or low demand for electricity. For reservoirs in a cascade, water releases from upstream reservoirs usually contribute to downstream inflows and pumping from downstream power stations results in upstream inflow. The owner of a hydro power plant use of this flexibility to maximize profit. We consider a price-taking producer facing the development in hourly electricity market prices and adapting generation accordingly over a finite time horizon of a number of days.

The operation of the reservoirs entails a large number of sequential decisions as well as considerable uncertainty. The problem involves reservoir storage dynamics, which should be incorporated into the water policy. Moreover, as charging and discharging of each reservoir influence the reservoir level of the others, the decisions of water release and pumping from multiple interconnected reservoirs requires a coordinated policy. We consider electricity market prices and reservoir inflows as the main sources of uncertainty due to unexpected market circumstances and unforeseen weather conditions that are disclosed over time. We model the operational problem of the reservoirs by stochastic dynamic programming such that the value of current decisions in each stage is weighted against their future effects. In our model, each hour represents a stage, decisions relate to the amounts of water charging and discharging, and the states include the reservoir levels, electricity prices, and random inflows. In the following sections, we reformulate, approximate and solve this problem using approximate dynamic programming. 

We start by defining relevant notation. The time horizon $\{1,...,T\}$ is taken to be a few operation days discretized into hourly time intervals indexed by $t$. We consider a hydro power network of interconnected reservoirs and index a reservoir by $j$ and the set of reservoirs by $J$. We let $\textbf{l}_{t}=(l_{1t},\dots,l_{|J|t})^T\in\mathbb{R}^{|J|}$ be the storage levels of the reservoirs in the beginning of time period $t$, where $|.|$ is the cardinality operator. For now, we disregard pumping of water to the reservoirs such that decisions only relates to water discharging. Accordingly, we let the decision vectors $\boldsymbol{\pi}_{t}=(\pi_{1t},\dots,\pi_{|J|t})^T\in\mathbb{R}^{|J|}$ represent the discharges from reservoirs during time period $t$. The random vectors $\boldsymbol{\nu}_{t}=(\nu_{1t},\dots,\nu_{|J|t})^T\in\mathbb{R}^{|J|}$ and variables $\rho_t\in\mathbb{R}$ refer to natural inflows of the reservoirs and the electricity market price during time period $t$, respectively. Also, we let $\boldsymbol{\nu}_{j[t]}=(\nu_{j1},\dots,\nu_{jt})^T\in\mathbb{R}^{t}$, $\boldsymbol{\nu}_{[t]}=(\boldsymbol{\nu}_{1[t]},\dots,\boldsymbol{\nu}_{|J|[t]})\in\mathbb{R}^{t|J|}$ and $\boldsymbol{\rho}_{[t]}=(\rho_{1},\dots,\rho_t)^T\in\mathbb{R}^{t}$ hold the time series of inflows and prices up to time $t$. We assume that the realizations of $\boldsymbol{\nu}_{[t]}$ and $\rho_{[t]}$ are known at the time of making decisions $\boldsymbol{\pi}_t$. To model the capacities of the reservoirs, we introduce upper and lower bounds on the reservoir levels, denoted by $\textbf{l}^{max}\in\mathbb{R}^{|J|}$ and $\textbf{l}^{min}\in\mathbb{R}^{|J|}$, and likewise upper and lower bounds on their discharge levels, represented by $\boldsymbol{\pi}^{max}\in\mathbb{R}^{|J|}$ and $\boldsymbol{\pi}^{min}\in\mathbb{R}^{|J|}$, respectively.

For ease of exposition, we first consider a cascade of connected reservoirs. Later, we generalize the problem to a more complex network of reservoirs. At time $t$, the set of feasible water discharges is given by 
\begin{align*}
\Pi_t(\textbf{l}_{t},\boldsymbol{\nu}_{t})=\Big\{\boldsymbol{\pi}_{t}:\ &\textbf{l}_{t+1}=\textbf{l}_{t}+R\boldsymbol{\pi}_{t}+\boldsymbol{\nu}_{t},\textbf{l}^{min}\leq \textbf{l}_{t+1}\leq \textbf{l}^{max}, \boldsymbol{\pi}^{min}\leq \boldsymbol{\pi}_{t}\leq \boldsymbol{\pi}^{max}\Big\}, \ t=1,\dots,T,
\end{align*}
where $R\in \mathbb{R}^{|J|}\times\mathbb{R}^{|J|}$ represents connections between reservoirs such that $R_{jj}=-1, R_{jk}=1$ for $k\in J^-(j)$ and $R_{jk}=0$, otherwise. The set $J^-(j)$ denotes the reservoirs immediately upstream from reservoir $j$ with $J^-(0)=\emptyset$. We assume that downstream inflows from upstream reservoirs arrive at the same time as being discharged. If there is delay of upstream discharges, the state space must be extended. The first constraint enforces the reservoir balance and determines the next state of the reservoir level as a function of the current. The upper and lower limits for the reservoir level and water discharges are imposed in the second and third constraints, respectively. 

The function $G(\boldsymbol\pi_{t})$ determines the power generation level as a function of the water discharges. For simplicity, we assume that $G(\boldsymbol\pi_{t})$ is a linear function of $\boldsymbol\pi_{t}$ (the following analysis in fact applies for convex functions) such that $G(\boldsymbol\pi_{t})=\textbf{g}^T\boldsymbol\pi_{t}$ where $\mathbf{g}=(g_1,\dots,g_{|J|})^T$ determines the conversion rates from water to power. This assumption is valid if each reservoir is connected to a single power station or to multiple power stations with the same conversion rates. At time $t$, the profit function is denoted $C_t(\boldsymbol{\pi}_{t},\rho_{t})$ and is given by
\begin{align*}
C_t(\boldsymbol{\pi}_{t},\textbf{l}_{t},\boldsymbol{\nu}_{[t]},\boldsymbol{\rho}_{[t]})=\rho_{t}G(\boldsymbol{\pi}_{t}), \ t=1,\dots,T-1
\end{align*}
and 
\begin{align*}
C_T(\boldsymbol{\pi}_{T},\textbf{l}_{T},\boldsymbol{\nu}_{[T]},\boldsymbol{\rho}_{[T]})=\rho_{T}G(\boldsymbol{\pi}_{T})+\mathbb{E}\Big[\rho_{T+1} \Big|\boldsymbol{\rho}_{[T]}\Big]G({\textbf{l}}_{T+1}),
\end{align*}
where ${\textbf{l}}_{T+1}=\textbf{l}_{T}+R\boldsymbol{\pi}_{T}+\boldsymbol{\nu}_{T}$. If $\rho_{T+1}$ is a random future value that reflects power prices beyond the time horizon, the profit at time $T$ includes the expected future value of water in the reservoirs.

The problem is to determine feasible levels of water discharges $(\boldsymbol{\pi}_1,...,\boldsymbol{\pi}_T)$ that maximize expected accumulated profits over the time horizon, i.e. 
\begin{align*}
\max_{(\boldsymbol{\pi}_1,\dots,\boldsymbol{\pi}_T)\in \Pi_1\times\dots\times\Pi_T}\mathbb{E}\Big[\sum_{t=1}^T\ C_t(\boldsymbol{\pi}_{t},\textbf{l}_{t},\boldsymbol{\nu}_{[t]},\boldsymbol{\rho}_{[t]})\Big].
\end{align*}
where the expectation operator $\mathbb{E}[\cdot]$ is with respect to the joint distribution of $\boldsymbol{\nu}_{[T]}$ and $\boldsymbol{\rho}_{[T]}$. We require that the decisions $(\boldsymbol{\pi}_1,...,\boldsymbol{\pi}_T)$ are adapted to the stochastic process $\boldsymbol{\nu}_{1},\boldsymbol{\rho}_{1},\dots,\boldsymbol{\nu}_{T},\boldsymbol{\rho}_{T}$, i.e. that $\boldsymbol{\pi}_t$ depends on the realization of $\boldsymbol{\nu}_{[t]},\boldsymbol{\rho}_{[t]}$ but not on future realizations.

\subsection{Formulation by stochastic dynamic programming}

To formulate the operational problem of the hydro power network by stochastic dynamic programming, we let $(\textbf{l}_{t},\boldsymbol{\nu}_{[t]},\boldsymbol{\rho}_{[t]})$ be the so-called pre-decision state at time $t$, including the reservoir levels \emph{before} discharge decisions $\boldsymbol{\pi}_t$, also referred to as actions, are made. Moreover, we let $V_t(\textbf{l}_{t},\boldsymbol{\nu}_{[t]},\boldsymbol{\rho}_{[t]})$ denote the value of being in this state at time $t$. 

By the principle of optimality, the value functions satisfy the Bellman equations
\begin{align}
&V_{t}(\textbf{l}_{t},\boldsymbol{\nu}_{[t]},\boldsymbol{\rho}_{[t]})= \max_{\boldsymbol{\pi}_t\in \Pi_t(\textbf{l}_{t},\boldsymbol{\nu}_{t})}\Big\{C_t(\boldsymbol{\pi}_{t},\textbf{l}_{t},\boldsymbol{\nu}_{[t]},\boldsymbol{\rho}_{[t]})+\mathbb{E}\Big[V_{t+1}(\textbf{l}_{t+1},\boldsymbol{\nu}_{[t+1]},\boldsymbol{\rho}_{[t+1]})\Big|\boldsymbol{\nu}_{[t]},\boldsymbol{\rho}_{[t]}\Big]\Big\}, \nonumber\\ 
&t=1,\dots,T,\label{Bellman1}\\[2mm] 
&V_{T+1}(\textbf{l}_{T+1},\boldsymbol{\nu}_{[T+1]},\boldsymbol{\rho}_{[T+1]})=0,\label{Bellman2} 
\end{align}
where  $\mathbb{E}[\cdot|\cdot]$ is the conditional expectation. By these equations, the value at time $t$ depends on the current profit $C_t(\boldsymbol{\pi}_{t},\textbf{l}_{t},\boldsymbol{\nu}_{[t]},\boldsymbol{\rho}_{[t]})$ resulting from current actions $\boldsymbol{\pi}_{t}$ and the future value $V_{t+1}(\textbf{l}_{t+1},\boldsymbol{\nu}_{[t+1]},\boldsymbol{\rho}_{[t+1]})$, also referred to as the future water value, which is random at time $t$. 

Since the optimization problem of each stage involves an expected value in the objective function, it is a stochastic problem. Thus, the evaluation of an action involves the evaluation of the expectation. For instance, if the distribution is discrete with $N$ realizations, this requires the evaluation of $N$ future value functions. To avoid this, we use the post-decision state and reformulate the Bellman equations such that the optimal value of the optimization problem of each stage is random and the expectation is with respect to this optimal value. For each realization, it suffices to solve a deterministic optimization problem. This strategy is presented in the following section.

\section{Post-decision reformulation} \label{3}

We start by presenting the reformulation of \eqref{Bellman1}-\eqref{Bellman2} using the post-decision state. Let $\bar V_t$ be the value of the post-decision state $(\bar{\textbf{l}}_{t},\boldsymbol{\nu}_{[t]},\boldsymbol{\rho}_{[t]})$ at time $t$, including the reservoir level immediately \emph{after} making discharge decisions $\boldsymbol{\pi}_{t}$ but before the arrival of inflows $\boldsymbol{\nu}_{t}$, i.e.\ with $\bar{\textbf{l}}_{t}=\textbf{l}_{t}+R\boldsymbol{\pi}_{t}$, and thus, $\textbf{l}_{t+1}=\bar{\textbf{l}}_{t}+\boldsymbol{\nu}_{t}$. Then,
\begin{subequations} \label{reformulation}
\begin{align}
&\bar V_t(\bar{\textbf{l}}_{t},\boldsymbol{\nu}_{[t]},\boldsymbol{\rho}_{[t]})=\mathbb{E}\Big[V_{t+1}(\textbf{l}_{t+1},\boldsymbol{\nu}_{[t+1]},\boldsymbol{\rho}_{[t+1]}) \Big|\boldsymbol{\nu}_{[t]},\boldsymbol{\rho}_{[t]}\Big]\nonumber\\
&=\mathbb{E}\Big[\!\max_{\boldsymbol{\pi}_{t+1}\in \Pi_{t+1}(\textbf{l}_{t+1},\boldsymbol{\nu}_{t+1})}\!\Big\{C_{t+1}(\boldsymbol{\pi}_{t+1},\textbf{l}_{t+1},\boldsymbol{\nu}_{[t+1]},\boldsymbol{\rho}_{[t+1]})\!+\! \mathbb{E}\Big[V_{t+2}(\textbf{l}_{t+2},\boldsymbol{\nu}_{[t+2]},\boldsymbol{\rho}_{[t+2]})\Big|\boldsymbol{\nu}_{[t+1]},\boldsymbol{\rho}_{[t+1]}\Big]\!\Big\}\Big|\boldsymbol{\nu}_{[t]},\boldsymbol{\rho}_{[t]}\!\Big]\nonumber\\
&=\mathbb{E}\Big[\max_{\boldsymbol{\pi}_{t+1}\in \Pi_{t+1}(\bar{\textbf{l}}_{t}+\boldsymbol{\nu}_{t},\boldsymbol{\nu}_{t+1})}\Big\{C_{t+1}(\boldsymbol{\pi}_{t+1},\bar{\textbf{l}}_{t}+\boldsymbol{\nu}_{t},\boldsymbol{\nu}_{[t+1]},\boldsymbol{\rho}_{[t+1]})\!+\! \bar V_{t+1}(\bar{\textbf{l}}_{t+1},\boldsymbol{\nu}_{[t+1]},\boldsymbol{\rho}_{[t+1]})\Big\}\Big|\boldsymbol{\nu}_{[t]},\boldsymbol{\rho}_{[t]}\Big], \nonumber\\[2mm]
&t=0,\!\dots\!,\!T-1,\label{Bellman1post}\\[2mm]
&\bar V_{T}(\bar{\textbf{l}}_{T},\boldsymbol{\nu}_{[T]},\boldsymbol{\rho}_{[T]})=\mathbb{E}\Big[ V_{T+1}({\textbf{l}}_{T+1},\boldsymbol{\nu}_{[T+1]},\boldsymbol{\rho}_{[T+1]})\Big|\boldsymbol{\nu}_{[T]},\boldsymbol{\rho}_{[T]}\Big]=0.\label{Bellman2post}
\end{align}
\end{subequations}
The difference between pre-decision and post-decision states is illustrated for a decision-tree in Fig.\ \ref{fig1}. Solid lines correspond to discharge decisions and dotted lines to realizations of inflows. Square and circle nodes represent post-decision and pre-decision states, respectively. As seen from the figure, the pre-decision reservoir level $\mathbf{l}_t$ defines the state at time $t$ before we make decision $\boldsymbol{\pi}_t$. Then, the post-decision reservoir level $\bar{\mathbf{l}}_t$ defines the state at time $t$, immediately after we made the decision. Finally, the realization of the random vector $\boldsymbol{\nu}_t$ takes us to new pre-decision state $\mathbf{l}_{t+1}$.  

As in the pre-decision formulation, each stage involves an expected value. In the post-decision formulation, the expectation is with respect to an optimal value. if the distribution is discrete with $N$ realizations, each stage requires $N$ optimal values of deterministic optimization problems. Hence, we solve $N$ optimization problems in each stage. The evaluation of an action, however, involves only a single evaluation of the future value function. 

Since the strategy of using post-decision states mitigates the curse of dimensionality caused by evaluating an expectation, formulation \eqref{reformulation} may provide a computational advantage. In the remainder of the paper, we use the post-decision formulation.

\begin{figure}[tb]
\begin{center}
\includegraphics[height=2.5in,width=0.35\linewidth]{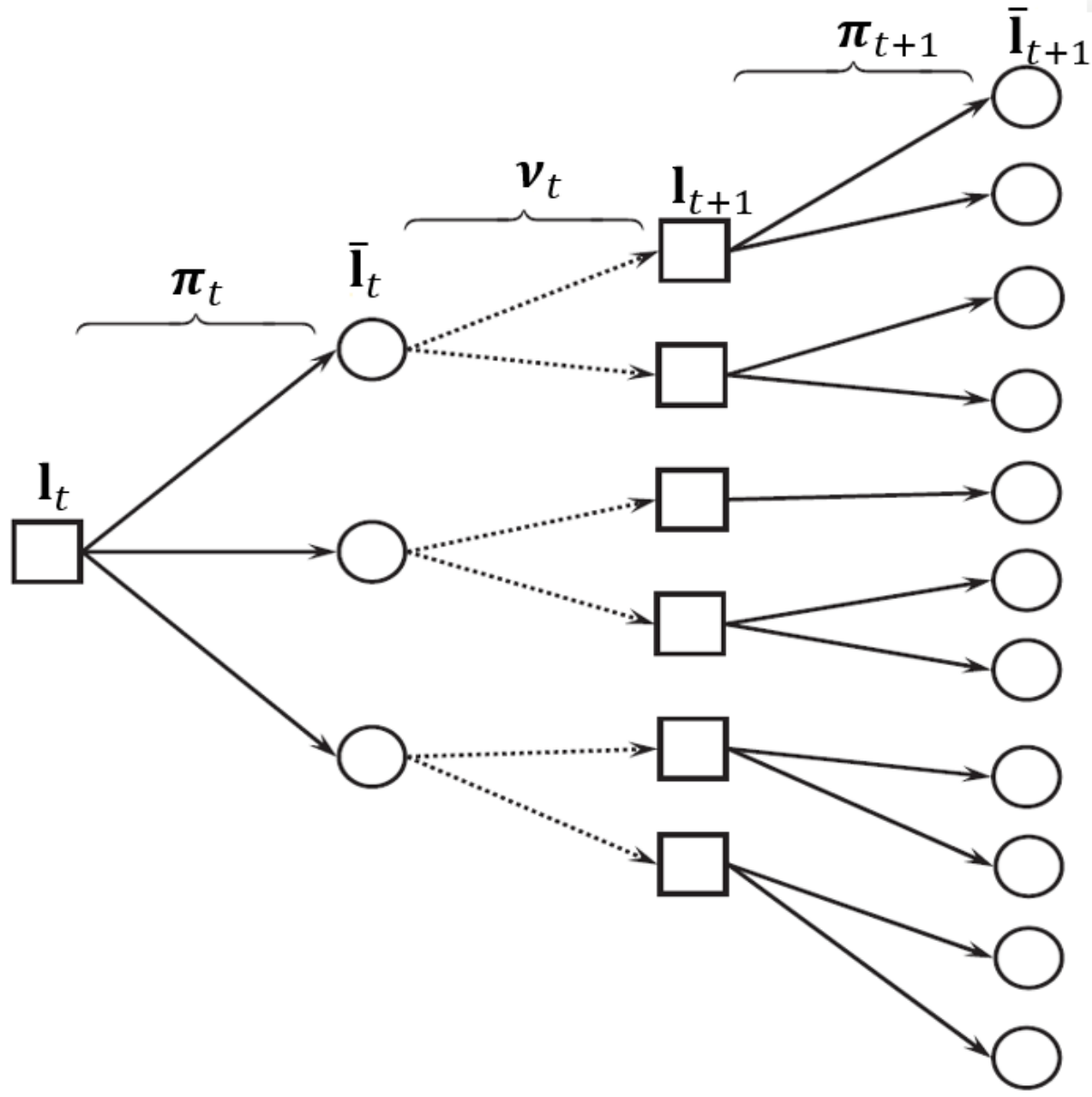}
\vspace{2mm}
 \caption{Post-decision and pre-decision states illustrated for a decision tree. Solid lines correspond to discharge decisions and dotted lines to realizations of inflows. Square and circle nodes represent post-decision and pre-decision states, respectively.} \label{fig1}
\end{center}
\end{figure}

\section{Value function approximation}\label{4}

For $t=0,\dots,T-1$, let
\begin{align}
&\bar V_t(\bar{\textbf{l}}_{t},\boldsymbol{\nu}_{[t]},\boldsymbol{\rho}_{[t]})=\mathbb{E}\Big[\bar W_t(\bar{\textbf{l}}_{t},\boldsymbol{\nu}_{[t+1]},\boldsymbol{\rho}_{[t+1]})\Big|\boldsymbol{\nu}_{[t]},\boldsymbol{\rho}_{[t]}\Big]\nonumber
\end{align}
with
\begin{align}
&\bar W_t(\bar{\textbf{l}}_{t},\boldsymbol{\nu}_{[t+1]},\boldsymbol{\rho}_{[t+1]})=\max_{\boldsymbol{\pi}_{t+1}\in \Pi_{t+1}(\bar{\textbf{l}}_{t}+\boldsymbol{\nu}_{t},\boldsymbol{\nu}_{t+1})}\Big\{C_{t+1}(\boldsymbol{\pi}_{t+1},\bar{\textbf{l}}_{t}+\boldsymbol{\nu}_{t},\boldsymbol{\nu}_{[t+1]},\boldsymbol{\rho}_{[t+1]})\!+\! \bar V_{t+1}(\bar{\textbf{l}}_{t+1},\boldsymbol{\nu}_{[t+1]},\boldsymbol{\rho}_{[t+1]})\Big\}, \nonumber
\end{align}
and
\begin{align}
\Pi_{t+1}(\bar{\textbf{l}}_{t}+\boldsymbol{\nu}_{t},\boldsymbol{\nu}_{t+1})=\Big\{\boldsymbol{\pi}_{t+1}:\ &\bar{\textbf{l}}_{t+1}=\bar{\textbf{l}}_{t}+\boldsymbol{\nu}_{t}+R\boldsymbol{\pi}_{t+1},\textbf{l}^{min}\leq \bar{\textbf{l}}_{t+1}+\boldsymbol{\nu}_{t+1}\leq \textbf{l}^{max}, \boldsymbol{\pi}^{min}\leq \boldsymbol{\pi}_{t+1}\leq \boldsymbol{\pi}^{max}\Big\}.\nonumber
\end{align}

In the following, we assume that the dynamics of prices and inflows are given by the moving average autoregressive (ARMA) processes
\begin{align}
{\rho}_{t+1}=\boldsymbol\theta_{[t]}^T{\boldsymbol{\rho}}_{[t]}+\boldsymbol{\eta}_{[t+1]}^T\boldsymbol{\epsilon}_{[t+1]}, \ {\nu}_{jt+1}=\boldsymbol{\psi}_{j[t]}^T\boldsymbol{\nu}_{j[t]}+\boldsymbol{\phi}_{j[t+1]}^T\boldsymbol{\xi}_{j[t+1]}\nonumber
\end{align}
with $\boldsymbol\theta_{[t]}=(\theta_{1},\dots,\theta_{t})^T, \boldsymbol\eta_{[t]}=(\eta_{1},\dots,\eta_{t})^T\in\mathbb{R}^t$ and $\epsilon_t\in\mathbb{R}$ i.d.d.\ random variables and with $\boldsymbol{\psi}_{j[t]}=(\psi_{j1},\dots,\psi_{jt})^T,\boldsymbol{\phi}_{j[t]}=(\phi_{j1},\dots,\phi_{jt})^T\in\mathbb{R}^{t}$ and ${\xi}_{jt}\in\mathbb{R}$ i.d.d for $j=1,\dots,|J|$. A compact form of the inflow time series is $\boldsymbol{\nu}_{t+1}=\text{diag}(\boldsymbol\psi_{[t]}^T\boldsymbol{\nu}_{[t]}+\boldsymbol\eta_{[t+1]}^T\boldsymbol{\xi}_{[t+1]})$ where $\boldsymbol{\psi}_{[t]}=(\boldsymbol{\psi}_{1[t]},\dots,\boldsymbol{\psi}_{|J|[t]}),\boldsymbol{\phi}_{[t]}=(\boldsymbol{\phi}_{1[t]},\dots,\boldsymbol{\phi}_{|J|[t]}),\boldsymbol{\xi}_{[t]}=(\boldsymbol{\xi}_{1[t]},\dots,\boldsymbol{\xi}_{|J|[t]})\in\mathbb{R}^{t|J|}$.

With this assumption 
\begin{align}
&\bar V_t(\bar{\textbf{l}}_{t},\boldsymbol{\nu}_{[t]},\boldsymbol{\rho}_{[t]})=\mathbb{E}\Big[\bar W_t(\bar{\textbf{l}}_{t},\boldsymbol{\nu}_{[t]},\text{diag}(\boldsymbol\psi_{[t]}^T\boldsymbol{\nu}_{[t]}+\boldsymbol\eta_{[t+1]}^T\boldsymbol{\xi}_{[t+1]}),\boldsymbol{\rho}_{[t]},\boldsymbol\theta_{[t]}^T{\boldsymbol{\rho}}_{[t]}+\boldsymbol{\eta}_{[t+1]}^T\boldsymbol{\epsilon}_{[t+1]})\Big]\nonumber
\end{align}
where the expectation is with respect to $\epsilon_{t+1}$ and $\boldsymbol{\xi}_{t+1}$ and 
\begin{align}
\bar W_t&(\bar{\textbf{l}}_{t},\boldsymbol{\nu}_{[t]},\text{diag}(\boldsymbol\psi_{[t]}^T\boldsymbol{\nu}_{[t]}+\boldsymbol\eta_{[t+1]}^T\boldsymbol{\xi}_{[t+1]}),\boldsymbol{\rho}_{[t]},\boldsymbol\theta_{[t]}^T{\boldsymbol{\rho}}_{[t]}+\boldsymbol{\eta}_{[t+1]}^T\boldsymbol{\epsilon}_{[t+1]})\nonumber\\[2mm]
=&\max_{\boldsymbol{\pi}_{t+1}\in \Pi_{t+1}(\bar{\textbf{l}}_{t}+\boldsymbol{\nu}_{t},\text{diag}(\boldsymbol\psi_{[t]}^T\boldsymbol{\nu}_{[t]}+\boldsymbol\eta_{[t+1]}^T\boldsymbol{\xi}_{[t+1]})}\Big\{C_{t+1}(\boldsymbol{\pi}_{t+1},\bar{\textbf{l}}_{t}+\boldsymbol{\nu}_{t},\boldsymbol{\nu}_{[t]},\text{diag}(\boldsymbol\psi_{[t]}^T\boldsymbol{\nu}_{[t]}+\boldsymbol\eta_{[t+1]}^T\boldsymbol{\xi}_{[t+1]}),\nonumber\\
&\boldsymbol{\rho}_{[t]},\boldsymbol\theta_{[t]}^T{\boldsymbol{\rho}}_{[t]}+\boldsymbol{\eta}_{[t+1]}^T\boldsymbol{\epsilon}_{[t+1]})+\! \bar V_{t+1}(\bar{\textbf{l}}_{t+1},\boldsymbol{\nu}_{[t]},\text{diag}(\boldsymbol\psi_{[t]}^T\boldsymbol{\nu}_{[t]}+\boldsymbol\eta_{[t+1]}^T\boldsymbol{\xi}_{[t+1]}),\boldsymbol{\rho}_{[t]},\boldsymbol\theta_{[t]}^T{\boldsymbol{\rho}}_{[t]}+\boldsymbol{\eta}_{[t+1]}^T\boldsymbol{\epsilon}_{[t+1]})\Big\}\nonumber
\end{align}
and
\begin{align}
\Pi_{t+1}(\bar{\textbf{l}}_{t}+\boldsymbol{\nu}_{t},&\text{diag}(\boldsymbol\psi_{[t]}^T\boldsymbol{\nu}_{[t]}+\boldsymbol\eta_{[t+1]}^T\boldsymbol{\xi}_{[t+1]}))=\Big\{\boldsymbol{\pi}_{t+1}:\bar{\textbf{l}}_{t+1}=\bar{\textbf{l}}_{t}+\boldsymbol{\nu}_{t}+R\boldsymbol{\pi}_{t+1},\nonumber\\
&\textbf{l}^{min}\leq \bar{\textbf{l}}_{t+1}+\text{diag}(\boldsymbol\psi_{[t]}^T\boldsymbol{\nu}_{[t]}+\boldsymbol\eta_{[t+1]}^T\boldsymbol{\xi}_{[t+1]})\leq \textbf{l}^{max}, \boldsymbol{\pi}^{min}\leq \boldsymbol{\pi}_{t+1}\leq \boldsymbol{\pi}^{max}\Big\}.\nonumber
\end{align}

We now prove that $\bar V_{t}(\bar{\textbf{l}}_{t},\boldsymbol{\nu}_{[t]},\boldsymbol{\rho}_{[t]})$ is concave piece-wise linear in the reservoir levels and inflows. Based on that, we derive an upper bound for the post-decision value function, which is a linear function of the reservoir level and inflows.

\begin{prop} 
For fixed $\boldsymbol{\rho}_{[t]}$, the value function $\bar V_t(\bar{\textbf{l}}_{t},\boldsymbol{\nu}_{[t]},\boldsymbol{\rho}_{[t]})$ is concave and piece-wise linear in $(\bar{\textbf{l}}_{t},\boldsymbol\nu_{[t]})$ for $t=1,...,T$.
\end{prop}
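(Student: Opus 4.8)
The plan is to argue by backward induction on $t$, using the recursive pair that expresses $\bar V_t$ as a conditional expectation of $\bar W_t$ and $\bar W_t$ as a parametric maximization involving $\bar V_{t+1}$. The induction hypothesis carried backward is precisely the statement: for fixed $\boldsymbol{\rho}_{[t]}$, the function $\bar V_t(\bar{\textbf{l}}_t,\boldsymbol{\nu}_{[t]},\boldsymbol{\rho}_{[t]})$ is concave and piece-wise linear in $(\bar{\textbf{l}}_t,\boldsymbol{\nu}_{[t]})$. The base case is immediate, since $\bar V_T=0$ by \eqref{Bellman2post}, and a constant is trivially concave and piece-wise linear.

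For the inductive step, assume $\bar V_{t+1}$ is concave and piece-wise linear in $(\bar{\textbf{l}}_{t+1},\boldsymbol{\nu}_{[t+1]})$ for fixed $\boldsymbol{\rho}_{[t+1]}$, and first analyze $\bar W_t$ as a function of the parameters $(\bar{\textbf{l}}_t,\boldsymbol{\nu}_{[t+1]})$. Three observations drive the argument: (i) the post-decision level $\bar{\textbf{l}}_{t+1}=\bar{\textbf{l}}_t+\boldsymbol{\nu}_t+R\boldsymbol{\pi}_{t+1}$ is affine in $(\bar{\textbf{l}}_t,\boldsymbol{\nu}_t,\boldsymbol{\pi}_{t+1})$, so composing the induction hypothesis with this affine map keeps $\bar V_{t+1}(\bar{\textbf{l}}_{t+1},\boldsymbol{\nu}_{[t+1]},\boldsymbol{\rho}_{[t+1]})$ jointly concave and piece-wise linear in $(\boldsymbol{\pi}_{t+1},\bar{\textbf{l}}_t,\boldsymbol{\nu}_{[t+1]})$; (ii) for fixed $\boldsymbol{\rho}_{[t+1]}$ the immediate profit $C_{t+1}=\rho_{t+1}\textbf{g}^T\boldsymbol{\pi}_{t+1}$ is linear in $\boldsymbol{\pi}_{t+1}$ and carries no parameter dependence; and (iii) after substituting the balance equation, the feasible set $\Pi_{t+1}$ is described entirely by inequalities jointly linear in $(\boldsymbol{\pi}_{t+1},\bar{\textbf{l}}_t,\boldsymbol{\nu}_t,\boldsymbol{\nu}_{t+1})$, so the graph of the feasible correspondence is a polyhedron.

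Representing the concave piece-wise linear term $\bar V_{t+1}$ as a minimum of finitely many affine functions and introducing an epigraph variable $z$, I would rewrite $\bar W_t$ as the optimal value of a linear program in $(\boldsymbol{\pi}_{t+1},z)$ in which the parameters $(\bar{\textbf{l}}_t,\boldsymbol{\nu}_{[t+1]})$ enter only linearly, through the right-hand sides. Standard parametric linear programming then yields that $\bar W_t$ is concave and piece-wise linear on its domain of feasibility: concavity is partial maximization of a jointly concave function over a polyhedral region, and piece-wise linearity follows from the finiteness of the set of optimal bases, each contributing one affine piece on a polyhedral region of parameter space. To pass to $\bar V_t=\mathbb{E}[\bar W_t\mid\boldsymbol{\nu}_{[t]},\boldsymbol{\rho}_{[t]}]$, note that under the ARMA dynamics, for fixed $\boldsymbol{\rho}_{[t]}$ and a fixed realization of the innovations $(\boldsymbol{\xi}_{[t+1]},\boldsymbol{\epsilon}_{[t+1]})$, the map $(\bar{\textbf{l}}_t,\boldsymbol{\nu}_{[t]})\mapsto(\bar{\textbf{l}}_t,\boldsymbol{\nu}_{[t]},\boldsymbol{\nu}_{t+1})$ is affine, so composing $\bar W_t$ with it preserves concavity and piece-wise linearity in $(\bar{\textbf{l}}_t,\boldsymbol{\nu}_{[t]})$; the conditional expectation over the finitely supported innovations is then a non-negatively weighted sum of such functions, again concave and piece-wise linear, which closes the induction.

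The main obstacle I anticipate is observation (iii) combined with the parametric-LP step: one must verify that the parameters appear in the feasible \emph{description} and not only in the objective, so that the feasibility graph is polyhedral and the classical result guaranteeing a piece-wise linear value function genuinely applies. One must also restrict to the effective domain where the inner program is feasible, and invoke discreteness of the innovations so that the expectation preserves piece-wise linearity rather than smoothing it into a merely concave function.
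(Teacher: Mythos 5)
Your proof takes essentially the same route as the paper's: backward induction from $\bar V_T=0$, affine composition of the inductive hypothesis through the reservoir balance and the ARMA recursion, a parametric linear-programming argument showing $\bar W_t$ is concave and piece-wise linear because the parameters $(\bar{\textbf{l}}_t,\boldsymbol{\nu}_{[t]})$ enter only the right-hand sides, and preservation of both properties under the conditional expectation. Your version is in fact slightly more careful than the paper's: the epigraph reformulation makes the parametric-LP step rigorous, and you correctly flag that piece-wise linearity (unlike concavity) survives the expectation only for finitely supported innovations---a hypothesis the paper's proof leaves implicit even though its numerical experiments use Gaussian noise.
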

\begin{proof}
Recall that $\bar V_{T}(\bar{\textbf{l}}_{T},\boldsymbol{\nu}_{[T]},\boldsymbol{\rho}_{[T]})=0$ for all $(\bar{\textbf{l}}_{T},\boldsymbol{\nu}_{[T]})$. 

Assume that $\bar V_{t+1}(\bar{\textbf{l}}_{t+1},\boldsymbol{\nu}_{[t+1]},\boldsymbol{\rho}_{[t+1]})$ is concave and piece-wise linear in $(\bar{\textbf{l}}_{t+1},\boldsymbol{\nu}_{[t+1]})$. Since $\bar{\textbf{l}}_{t+1}=\bar{\textbf{l}}_{t}+\boldsymbol{\nu}_{t}+R\boldsymbol{\pi}_{t+1}$ and $\boldsymbol{\nu}_{t+1}=\text{diag}(\boldsymbol\psi_{[t]}^T\boldsymbol{\nu}_{[t]}+\boldsymbol\eta_{[t+1]}^T\boldsymbol{\xi}_{[t+1]})$, $\bar V_{t+1}$ is concave and piece-wise linear in $(\bar{\textbf{l}}_{t},\boldsymbol\nu_{[t]})$ for fixed $\epsilon_{t+1}$ and $\boldsymbol{\xi}_{t+1}$. Thus, $\bar W_t$ is the optimal value of a linear program in which $(\bar{\textbf{l}}_{t},\boldsymbol\nu_{[t]})$ appears in the right-hand side of its constraints and as a result, it is concave and piece-wise linear in $(\bar{\textbf{l}}_{t},\boldsymbol\nu_{[t]})$ for fixed $\epsilon_{t+1}$ and $\boldsymbol{\xi}_{t+1}$. Moreover, the expectation over $\epsilon_{t+1}$ and $\boldsymbol{\xi}_{t+1}$ preserves the concavity and piece-wise linearity. As a result, $\bar V_t(\bar{\textbf{l}}_{t},\boldsymbol{\nu}_{[t]},\boldsymbol{\rho}_{[t]})$ is concave and piece-wise linear in $(\bar{\textbf{l}}_{t},\boldsymbol\nu_{[t]})$.
\end{proof}

We use the concavity and piece-wise linearity to derive a supporting hyperplane to $\bar V_t$, that is, an affine upper bounding function, which coincides with the value function in at least one point. By the supergradient inequality, in the point $(\bar{\textbf{l}}_{t}^{n-1},\boldsymbol\nu_{[t]}^{n-1})$, such hyperplane is given by
\begin{align}
\hat V_t(\bar{\textbf{l}}_{t},\boldsymbol{\nu}_{[t]},\boldsymbol{\rho}_{[t]})=\bar V_{t}(\bar{\textbf{l}}_{t}^{n-1},\boldsymbol{\nu}_{[t]}^{n-1},\boldsymbol{\rho}_{[t]})+\textbf a_{t}^T(\bar{\textbf{l}}_{t}-\bar{\textbf{l}}_{t}^{n-1})+\mathrm{Tr}\big(\textbf b_{[t]}^T(\boldsymbol{\nu}_{[t]}-\boldsymbol{\nu}_{[t]}^{n-1})\big)\nonumber   
\end{align}
with $\hat V_t(\bar{\textbf{l}}_{t},\boldsymbol{\nu}_{[t]},\boldsymbol{\rho}_{[t]})\geq \bar V_t(\bar{\textbf{l}}_{t},\boldsymbol{\nu}_{[t]},\boldsymbol{\rho}_{[t]})$ and $\hat V_t(\bar{\textbf{l}}_{t}^{n-1},\boldsymbol{\nu}_{[t]}^{n-1},\boldsymbol{\rho}_{[t]})=\bar V_t(\bar{\textbf{l}}_{t}^{n-1},\boldsymbol{\nu}_{[t]}^{n-1},\boldsymbol{\rho}_{[t]})$, where $\textbf a_t\in\mathbb{R}^{|J|}$ and $\textbf b_{[t]}\in\mathbb{R}^{t|J|}$ with
\begin{align}
a_{jt}\in\frac{\partial \bar V_t}{\partial l_{jt}}(\bar{\textbf{l}}_{t}^{n-1},\boldsymbol{\nu}_{[t]}^{n-1},\boldsymbol{\rho}_{[t]}), \ j\in J,\nonumber\\    
b_{js}\in\frac{\partial \bar V_t}{\partial \nu_{js}}(\bar{\textbf{l}}_{t}^{n-1},\boldsymbol{\nu}_{[t]}^{n-1},\boldsymbol{\rho}_{[t]}), \ j\in J, s=1,\dots,t,\nonumber    
\end{align}
where ${\partial \bar V_t}/{\partial l_{jt}}$ and ${\partial \bar V_t}/{\partial \nu_{js}}$ are components of a supergradient $\partial V_t$. 

Approximating the post-decision value function by $\hat V_{t+1}(\bar{\textbf{l}}_{t+1},\boldsymbol{\nu}_{[t+1]},\boldsymbol{\rho}_{[t+1]})$, the optimization problem becomes
\begin{align}
\hat W_t(\bar{\textbf{l}}_{t},\boldsymbol{\nu}_{[t+1]},\boldsymbol{\rho}_{[t+1]})=\max_{\boldsymbol{\pi}_{t+1}\in \Pi_{t+1}(\bar{\textbf{l}}_{t}+\boldsymbol{\nu}_{t},\boldsymbol{\nu}_{t+1})}\Big\{C_{t+1}(\boldsymbol{\pi}_{t+1},\bar{\textbf{l}}_{t}+\boldsymbol{\nu}_{t},\boldsymbol{\nu}_{[t+1]},\boldsymbol{\rho}_{[t+1]})\!
+\textbf a_{t+1}^T(\bar{\textbf{l}}_{t}+\boldsymbol{\nu}_{t}+R\boldsymbol{\pi}_{t+1})\Big\}\nonumber\\
+\bar V_{t+1}(\bar{\textbf{l}}_{t+1}^{n-1},\boldsymbol{\nu}_{[t+1]}^{n-1},\boldsymbol{\rho}_{[t+1]})-\textbf a_{t+1}^T\bar{\textbf{l}}_{t+1}^{n-1}+\mathrm{Tr}\big(\textbf b_{[t+1]}^T(\boldsymbol{\nu}_{[t+1]}-\boldsymbol{\nu}_{[t+1]}^{n-1})\big), \nonumber
\end{align}
which involves a linear programming problem and the term $\bar V_{t+1}(\bar{\textbf{l}}_{t+1}^{n-1},\boldsymbol{\nu}_{[t+1]}^{n-1},\boldsymbol{\rho}_{[t+1]})-\textbf a_{t+1}^T\bar{\textbf{l}}_{t+1}^{n-1}$\\ $-\mathrm{Tr}\big(\textbf b_{[t+1]}^T\boldsymbol{\nu}_{[t+1]}^{n-1}\big)$ which is constant with respect to $(\bar{\textbf{l}}_{t},\boldsymbol{\nu}_{[t+1]})$.

\section{The training algorithm}\label{5}

In this section, we propose an offline training algorithm to learn supergradients of the post-decision value function. We let the initial components of the supergradients be $\textbf{a}^0_1,...,\textbf{a}^0_{T-1}$ and $\textbf{b}^0_{[1]},...,\textbf{b}^0_{[T-1]}$ where $\textbf{a}^0_t=(a_{1t}^0,\dots,a_{|J|t}^0)^T$ and $\textbf{b}^0_{[t]}=(\textbf{b}^0_{1[t]},\dots,\textbf{b}^0_{|J|[t]})$. The algorithm iterates over $N$ training samples according to Algorithm \ref{alg}. At iteration $n$, the algorithm uses a sample of inflows $\boldsymbol{\nu}^n_{1},...,\boldsymbol{\nu}^n_{T}$ and prices ${\rho}^n_{1},\dots,{\rho}^n_{T}$. At time $t$, we use the sample values $\boldsymbol{\nu}^n_{t}$ and ${\rho}^n_{t}$ and the current pre-decision reservoir level $\bar{\textbf{l}}_{t}^n$ to sample the optimal value $\hat W_t(\bar{\textbf{l}}_{t},\boldsymbol{\nu}_{[t+1]},\boldsymbol{\rho}_{[t+1]})$, given the approximation of the post-decision value, $\hat V_t(\bar{\textbf{l}}_{t},\boldsymbol{\nu}_{[t]},\boldsymbol{\rho}_{[t]})$. This estimates the post-decision value $\hat V_t^{n}(\bar{\textbf{l}}_{t}^{n},\boldsymbol{\nu}_{[t]}^{n},\boldsymbol{\rho}_{[t]}^{n})$, cf.\ Step 1.(a), and determines the next pre-decision reservoir level at time $t+1$, $\bar{\textbf{l}}_{t+1}^n$, cf. Step 1.(c).

At iteration $n$, we also update the estimate of the post-decision value $\hat V_{t}^n(\bar{\textbf{l}}_{t}^n,\boldsymbol{\nu}^{n}_{[t]},\boldsymbol{\rho}^{n}_{[t]})$. We do this by updating the components of $\mathbf{a}_{t}^n$ and $\mathbf{b}_{[t]}^n$ as follows: 
\begin{align}
a_{jt}^n=(1-\alpha_{n})a_{jt}^{n-1}+\alpha_{n}\big(\hat V_{t}^{n}(\bar{{\textbf{l}}}^n_{t}+\mathbf{e}_j,\boldsymbol{\nu}^n_{[t]},\boldsymbol\rho^n_{[t]})-\hat V_{t}^{n}(\bar{{\textbf{l}}}^n_{t+1},\boldsymbol{\nu}^n_{[t]},\boldsymbol\rho^n_{[t]})\big), \ j\in J\nonumber
\end{align}
and
\begin{align}
b_{js}^n=(1-\alpha_{n})b_{js}^{n-1}+\alpha_{n}\big(\hat V_{t}^{n}(\bar{{\textbf{l}}}^n_{t},\boldsymbol{\nu}^n_{[t]}+\mathbf{e}_{js},\boldsymbol\rho^n_{[t]})-\hat V_{t}^{n}(\bar{{\textbf{l}}}^n_{t},\boldsymbol{\nu}^n_{[t]},\boldsymbol\rho^n_{[t]})\big), \ j\in J, s=1,\dots,t,\nonumber
\end{align}
where $\mathbf{e}_j\in\mathbb{R}^{|J|}$ has $1$ at the $j$th entry and zero otherwise, and $\mathbf{e}_{js}\in\mathbb{R}^{t|J|}$ matrix with $1$ in the $j$th column and the $s$th row and zero otherwise, cf.\ Step 1.(b). 

The determination of $\mathbf{a}_{t}^n$ and $\mathbf{b}_{[t]}^n$, however, requires the solution of $|J|+1$ and $t|J|+1$ optimization problems, respectively. To reduce the number of optimization problems to solve in our computational experiments, we assume that $\mathbf{b}_{[t-1]}^n=0$ in time period $t$ such that $\mathrm{Tr}\big(\textbf{b}_{[t]}^T\boldsymbol\nu_{[t]}\big)=\textbf{b}_{t}^T\boldsymbol\nu_{t}$. As a result, we solve
\begin{align}
&\hat W_t(\bar{\textbf{l}}_{t},\boldsymbol{\nu}_{[t+1]},\boldsymbol{\rho}_{[t+1]})=\max_{\boldsymbol{\pi}_{t+1}\in \Pi_{t+1}(\bar{\textbf{l}}_{t}+\boldsymbol{\nu}_{t},\boldsymbol{\nu}_{t+1})}\Big\{C_{t+1}(\boldsymbol{\pi}_{t+1},\bar{\textbf{l}}_{t}+\boldsymbol{\nu}_{t},\boldsymbol{\nu}_{[t+1]},\boldsymbol{\rho}_{[t+1]})\!
+\textbf a_{t+1}^T(\bar{\textbf{l}}_{t}+\boldsymbol{\nu}_{t}+R\boldsymbol{\pi}_{t+1})\Big\}\nonumber\\
&+\bar V_{t+1}(\bar{\textbf{l}}_{t+1}^{n-1},\boldsymbol{\nu}_{[t+1]}^{n-1},\boldsymbol{\rho}_{[t+1]})-\textbf a_{t+1}^T\bar{\textbf{l}}_{t+1}^{n-1}+\textbf b_{t+1}^T(\boldsymbol{\nu}_{t+1}-\boldsymbol{\nu}_{t+1}^{n-1}), \nonumber
\end{align}
where we update the elements of $\mathbf{b}_{t}^n$ by
\begin{align}
b_{jt}^n=(1-\alpha_{n})b_{jt}^{n-1}+\alpha_{n}\big(\hat V_{t}^{n}(\bar{{\textbf{l}}}^n_{t},\boldsymbol{\nu}^n_{[t]}+\mathbf{e}_{j},\boldsymbol\rho^n_{[t]})-\hat V_{t}^{n}(\bar{{\textbf{l}}}^n_{t},\boldsymbol{\nu}^n_{[t]},\boldsymbol\rho^n_{[t]})\big), \ j\in J.\nonumber
\end{align}
Unfortunately, with this assumption of the supergradient, we are no longer guaranteed an affine upper bound.

\begin{algorithm}
\caption{Offline training}\label{alg}
\begin{enumerate}
\item[0.] Initialize the estimate of the supergradients $a_{jt}^0,b_{jt}^0, j\in J, t=1,\dots,T-1$ and the pre-decision \\ state ${\textbf{l}}_1^n={\textbf{l}}_1,\boldsymbol{\nu}_{1}^n=\boldsymbol{\nu}_{1},\rho_{1}^n=\rho_{1}, n=1,\dots,N$. Let $n=1$.
\item[1.] For $t=0,\dots,T-1:$
\begin{enumerate}
\item (Sample the post-decision value) Solve 
\begin{align}\label{online3}
\hat V_{t}^{n}({\textbf{l}}^n_{t},\boldsymbol{\nu}^n_{[t]},{\boldsymbol{\rho}}_{[t]}^n)=&\max_{\boldsymbol{\pi}_{t+1}\in \Pi_{t+1}(\bar{\textbf{l}}_{t}^n+\boldsymbol{\nu}_{t}^n,\boldsymbol{\nu}_{t+1}^n)}\Big\{C_{t+1}(\boldsymbol{\pi}_{t+1},\bar{\textbf{l}}_{t}^n+\boldsymbol{\nu}_{t}^n,\boldsymbol{\nu}_{[t+1]}^n,\boldsymbol{\rho}_{[t+1]}^n)\!\nonumber\\
&+(\textbf a_{t+1}^{n-1})^T(\bar{\textbf{l}}_{t}^n+\boldsymbol{\nu}_{t}^n+R\boldsymbol{\pi}_{t+1})\Big\}+\bar V_{t+1}(\bar{\textbf{l}}_{t+1}^{n-1},\boldsymbol{\nu}_{[t+1]}^{n-1},\boldsymbol{\rho}_{[t+1]}^{n-1})\nonumber\\[2mm]
&-(\textbf{a}_{t+1}^{n-1})^T\bar{\textbf{l}}_{t+1}^{n-1}+(\textbf{b}_{t+1}^{n-1})^T(\boldsymbol{\nu}_{t+1}^n-\boldsymbol{\nu}_{t+1}^{n-1}), \nonumber
\end{align}
and let $\boldsymbol{\pi}_{t+1}^n$ be an optimal solution. 
\item (Update the estimate of post-decision value) If $t>0$, let 
\begin{align}
a_{jt}^n=(1-\alpha_{n})a_{jt}^{n-1}+\alpha_{n}(\hat V_{t}^{n}({\textbf{l}}^n_{t}+\textbf{e}_j,{\boldsymbol{\nu}}_{[t]}^n,{\boldsymbol{\rho}}_{[t]}^n)-\hat V_{t}^{n}({\textbf{l}}^n_{t}, {\boldsymbol{\nu}}_{[t]}^n,{\boldsymbol{\rho}}_{[t]}^n)), j\in J,\nonumber
\end{align}
and
\begin{align}
b_{jt}^n=(1-\alpha_{n})b_{jt}^{n-1}+\alpha_{n}(\hat V_{t}^{n}({\textbf{l}}^n_{t},\boldsymbol{\nu}^n_{[t]}+\textbf{e}_j,{\boldsymbol{\rho}}_{[t]}^n)-\hat V_{t}^{n}({\textbf{l}}^n_{t}, {\boldsymbol{\nu}}_{[t]}^n,{\boldsymbol{\rho}}_{[t]}^n)), \ j\in J.\nonumber
\end{align}
\item Determine the next pre-decision state ${\textbf{l}}_{t+1}^n$
with 
\begin{align}
{\textbf{l}}_{t+1}^n=\bar{\textbf{l}}_{t}^n+\boldsymbol{\nu}_{t}^n\nonumber
\end{align}
\end{enumerate}
\item[2.] Let $n:=n+1$. If $n\leq N$, go to 1.
\item[3.] Return the estimates of the post-decision values $\hat V_{t}^N, t=0,\dots,T$. 
\end{enumerate}
\end{algorithm}

For online optimization, we use the supergradients of the post-decision value function obtained as well as the constant terms $\bar V_{t+1}(\bar{\textbf{l}}_{t+1}^{N},\boldsymbol{\nu}_{[t+1]}^{N},\boldsymbol{\rho}_{[t+1]}^{N})-(\textbf{a}_{t+1}^{N})^T\bar{\textbf{l}}_{t+1}^{N}-(\textbf{b}_{t+1}^{N})^T\boldsymbol{\nu}_{t+1}^{N}$ from the offline training algorithm. Thus, our online algorithm is identical to Algorithm \ref{alg}, except that we skip Step 1.(b).

\section{Computational results}\label{6}

In this section, we investigate the performance of the proposed ADP approach on a stylized version of a Norwegian hydro-power system \cite{Trine2} and on the more realistic case of the Swiss Kraftwerke Oberhasli AG hydro-power plant \cite{Swiss}. Our focus is convergence and quality of solutions. Source code is run in Matlab, using the YALMIP toolbox with Gurobi solver 8.1.1, on a 16-GB RAM personal computer clocking at 3.1 GHz that are accessible in the online companion \cite{Link}. 

\subsection{Input data}

As a demonstration example, we consider a Norwegian hydro-power system consisting of a cascade of two reservoirs for a time horizon of 48 hours. The upper reservoir is fed by external water inflows from rivers, while the lower reservoir both receives water inflows from rivers and water releases from the upper reservoir. Moreover, each reservoir is connected to a power station in which electricity is generated by releasing the water through a turbine. Data for the technical parameters of the reservoirs are provided in Table \ref{Table I}. For the learning of the value function approximation and for out-of-sample analysis, we generate training and test samples, respectively, of market prices and water inflows from autoregressive processes. Each sample contains prices and inflows for 48 hours. We use the following models (\ref{ARMA prices}) and (\ref{ARMA inflows}) from \cite{Trine} for generating the samples:
\begin{equation}\allowdisplaybreaks
(1-\theta_1B)(1-B)(1-B^{24})(1-B^{168})\rho_t=(1-\eta_1B)(1-\eta_{24}B)(1-\eta_{168}B^{168})\epsilon_t, \quad t\in \mathbb{Z},
\label{ARMA prices}
\end{equation}
\begin{equation}\allowdisplaybreaks
(1-\psi_1^jB)(1-B)\nu_{jt}=(1-\phi_1^jB-\phi_2^jB^{2})(1-\phi_{41}^jB^{41})\xi_{jt}, \quad j=1,2, t\in \mathbb{Z},
\label{ARMA inflows}
\end{equation}
where $B$ is the backshift operator, e.g.\ $B^i\rho_t=\rho_{t-i}$.

We assume that the stochastic processes of prices and inflows are uncorrelated. The parameter estimates of \eqref{ARMA prices} are $\theta_1=0.6874, \eta_1=0.9234, \eta_{24}=0.8502, \eta_{168}=0.9665$, and those of \eqref{ARMA inflows} are $\psi_1^1=0.9899, \phi_1^1=1.3156, \phi_2^1=-0.3504, \phi_{41}^1=0.8424$ for the upstream reservoir and $\psi_1^2=0.9775, \phi_1^2=1.4442, \phi_2^2=-0.5509, \phi_{41}^2=0.8304$ for the downstream reservoir. The random variables $\epsilon_{t}$ in (\ref{ARMA prices}) and $\xi_{1t}$ and $\xi_{2t}$ in (\ref{ARMA inflows}) are independent and identically Normally distributed over time with zero means and standard deviations of 0.2369, 0.6549 and 0.1646, respectively. The correlation between $\xi_{1t}$ and $\xi_{2t}$ is 0.0417. We consider the initial value of prices and inflows equal to $20 \ \$/MWh$ and $50 \ 10^3m^3/h$, respectively. 

\begin{table}[]
	\centering
\caption{Data for upper and lower reservoirs.}
\label{Table I}
\begin{tabular}{c c c c c c}
\hline \hline
\multirow{3}{*}{ {Reservoirs}} & Max reservoir & Max reservoir & Min reservoir& Initial & Rate of discharge \\ 
  & discharge  &capacity   &  capacity& reservoir level & to generation\\ 
  &  ($10^3m^3/h$)  &  ($10^3m^3/h$)  &  ($10^3m^3/h$)&  ($10^3m^3/h$)& ($MWh/10^3m^3$) \\\hline
Upper reservoir   & 57.96 &  1130  & 113 & 124.3 & 0.1101 \\
Lower reservoir & 121.36  & 1000  & 100 & 110 & 0.5051 \\
 \hline \hline  
\end{tabular}
\end{table}

\subsection{Convergence and running time}

 \begin{figure}[htb]
	\centering
	\includegraphics[width=0.6\columnwidth]{./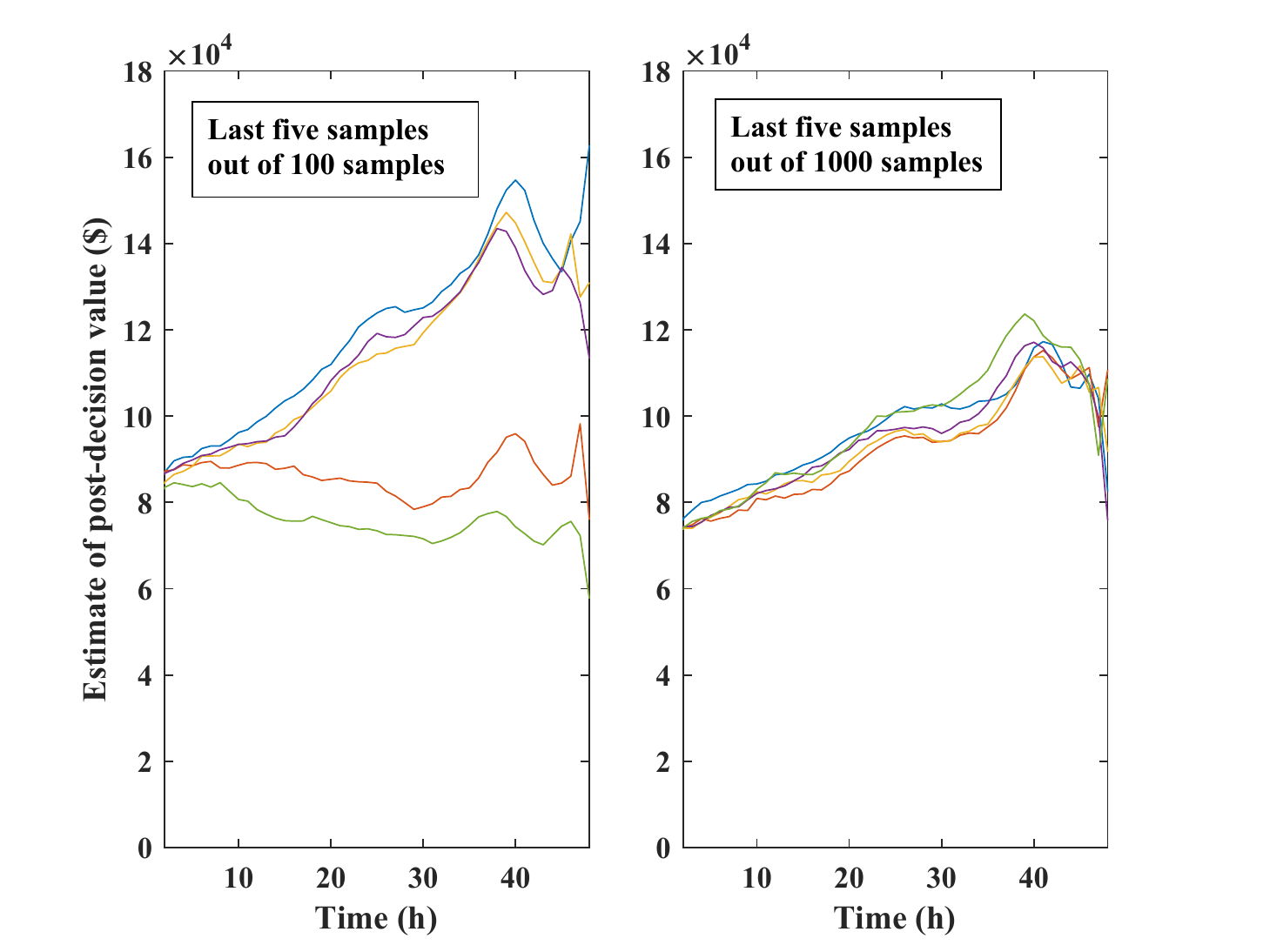}
	\caption{Estimate of post-decision values over 48 hours for the last five samples out of 100 and 1000 samples.}
	\label{convergence}
\end{figure}  

 \begin{figure}[!htb]
	\centering
	\includegraphics[width=0.5\columnwidth]{./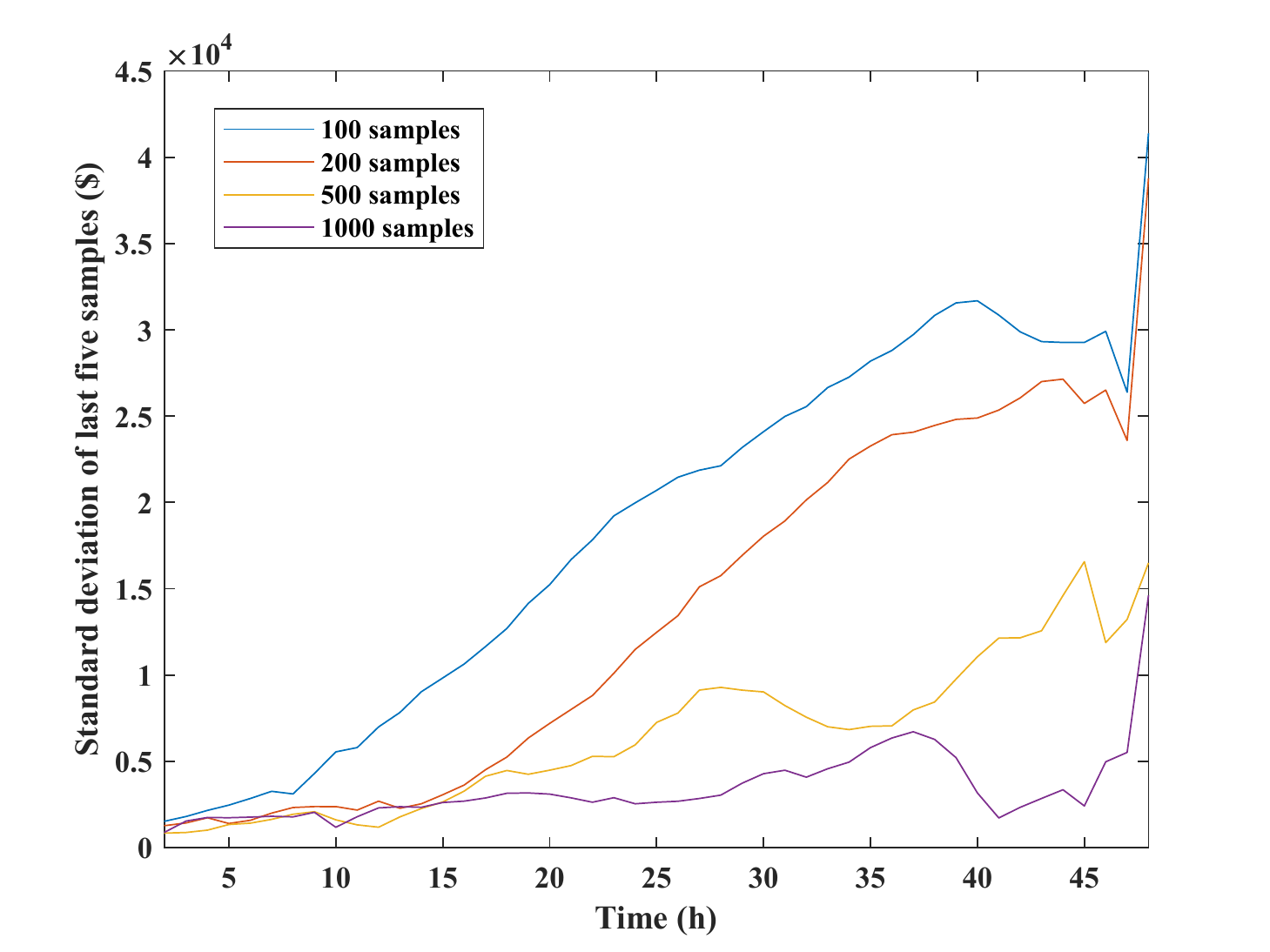}
	\caption{Standard deviation of last five samples out of 100, 200, 500, and 1000 samples.}
	\label{std}
\end{figure}

\begin{figure}[!htb]
	\centering
	\includegraphics[width=0.5\columnwidth]{./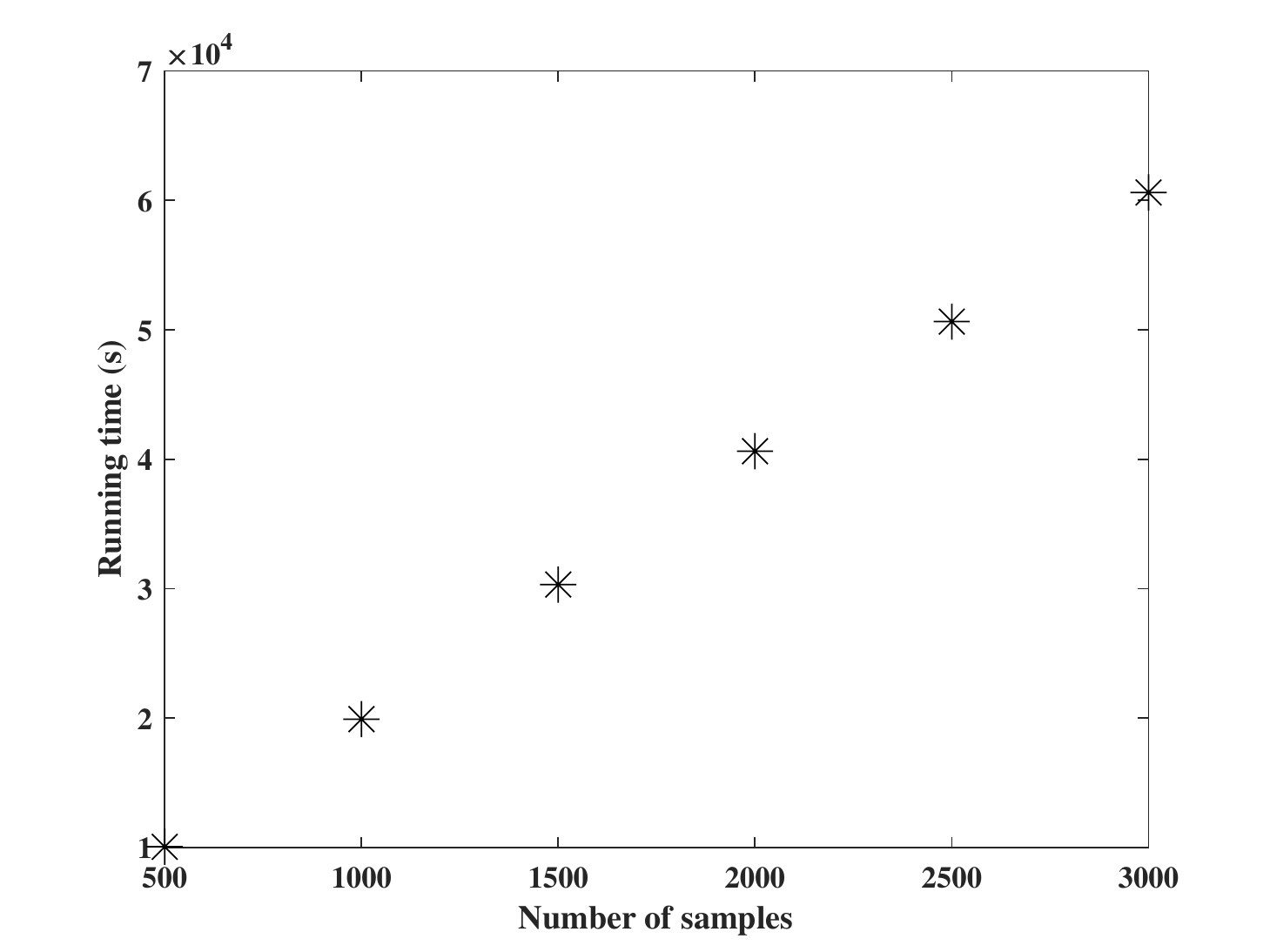}
	\caption{Running time for 500, 1000, 1500, 2000, 2500, and 3000 samples.}
	\label{runningtime}
\end{figure}
\begin{figure}[htb]
	\centering
	\includegraphics[height=7cm, width=0.7\columnwidth]{./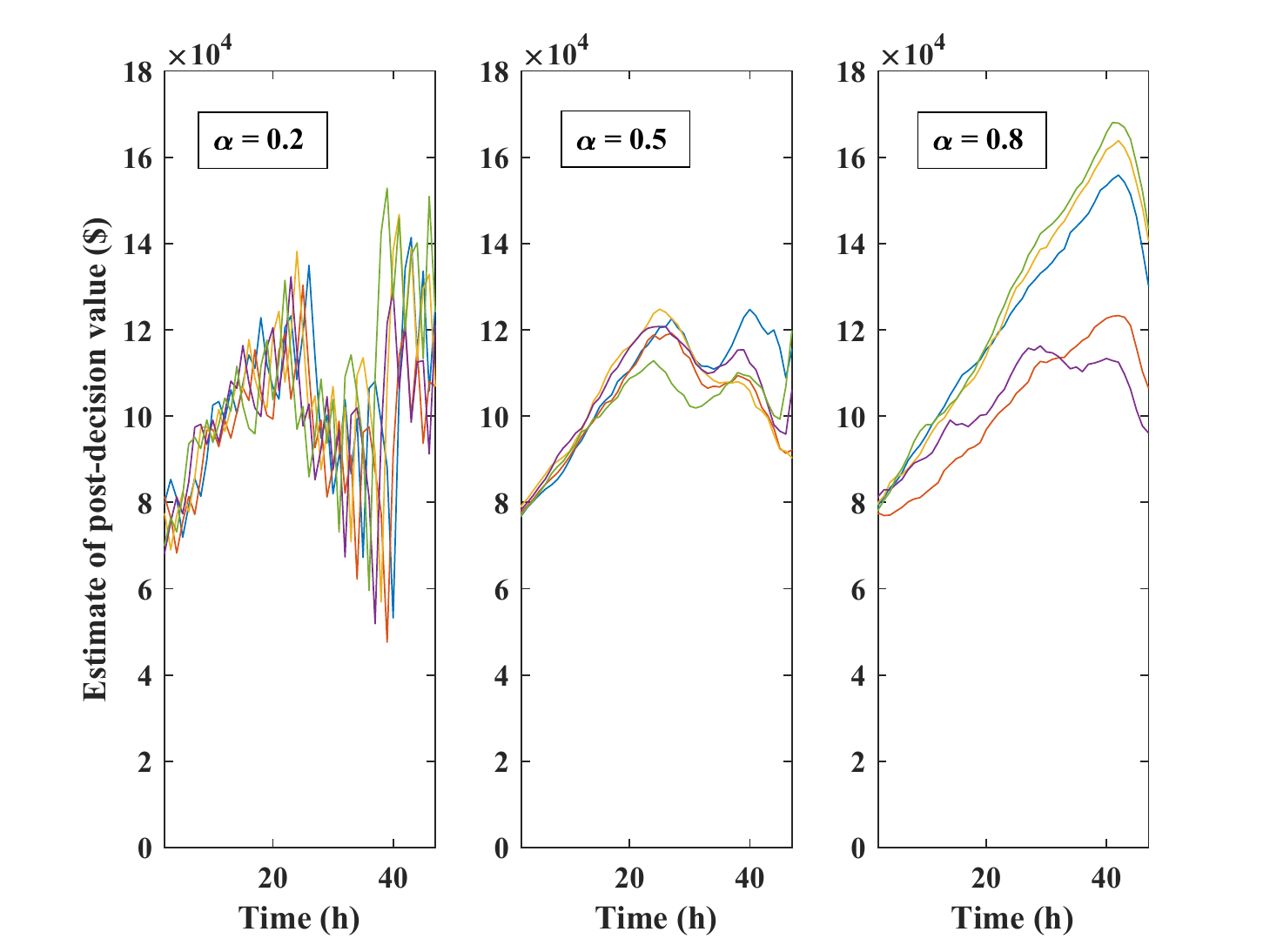}
	\caption{Convergence for different values of $\alpha$.}
	\label{alpha_convergence}
\end{figure}
 
To provide numerical evidence of convergence, we compare the performance of the algorithm for a varying number of training samples. The initial learning rate $\alpha$ is set to $0.5$. Fig. \ref{convergence} depicts the estimated post-decision values, $\hat V_t, t=0,\dots,48$, for the last five samples out of 100 and 1000 samples, respectively. Moreover, Fig.\ \ref{std} shows the standard deviation of the last five samples out of 100, 200, 500, and 1000 samples. As it can be observed, the estimates vary much less, the larger the number of samples. In fact, the average standard deviation decreases from 16.87\% of the mean (100 samples) to 3.50\% of the mean (1000 samples). Clearly, a larger number of samples results in a better convergence. This is, however, at the expense of higher computational time. Fig.\ \ref{runningtime} depicts the running time of the ADP algorithm as a function of the sample size. As expected, the running time is seen to increase linearly with the number of samples. With $500$ samples, the running time is approx.\ an hour. To  investigate how convergence depends on the learning rate $\alpha$, we finally vary this parameter. Fig.\ \ref{alpha_convergence} illustrates the importance of parameter tuning.

We confirm convergence towards the exact optimal value in a deterministic setting. The deterministic reservoir problem is equivalent to the following linear programming (LP) formulation:
\begin{subequations}
\label{lp}
\begin{align}
\max\ \ & \sum_{t=1}^T\rho_t \textbf{g}^T\boldsymbol{\pi}_{t}+\rho_{T+1}\textbf{g}^T \textbf{l}_{T+1}\\
\text{st}\ \ & \textbf{l}_{t+1}=\textbf{l}_{t}+R\boldsymbol{\pi}_{t}+\boldsymbol{\nu}_{t}, & t=1,\dots,T\nonumber\\[1mm]
&\textbf{l}^{min}\leq \textbf{l}_{t+1}\leq \textbf{l}^{max}, &  t=1,\dots,T\\[1mm]
&\boldsymbol\pi^{min}\leq \boldsymbol\pi_{t}\leq \boldsymbol\pi^{max}, & t=1,\dots,T.
\end{align}
\end{subequations}
The formulation is exact in the sense that it does not involve any approximation. Using the same sample of prices and inflows, we both solve the LP problem and run the ADP algorithm for 200 iterations. We compare the exact and estimated post-decision value at time $t=0$, $\hat V_0$, from LP and ADP, respectively. By repeating the procedure for ten samples, we obtain an average inaccuracy of $1\%$. With the computational challenges of stochastic programming, a comparison between an exact LP formulation and the ADP approach is feasible only for the deterministic problem. 

\subsection{The quality of solutions}

For the analysis of the solutions, we first run the offline ADP algorithm to train the supergradients of the post-decision value functions, using 1,000 training samples. Next, we fix $\textbf{a}_t$ and $\textbf{b}_t$ to the values obtained from the last iteration of the offline algorithm and run the online algorithm to obtain an optimal solution $(\boldsymbol{\pi}_1^*,...,\boldsymbol{\pi}_T^*)$ for each sample. 

Figs. \ref{900} and \ref{927} illustrate the discharging and reservoir levels for an arbitrary sample of prices and external inflows. As expected, the reservoirs start discharging as the price increases. The upper reservoir starts releasing water at lower prices than the lower reservoir, since upstream water releases can be used to produce power in the upper power station, but continue downstream and can likewise be used to produce in the lower power station. At high prices, the lower reservoir naturally releases more water than the upper reservoir. Due to external inflows, the reservoir levels increase when no discharging occurs. In spite of inflows, however, the reservoir levels decrease when discharging. For the lower reservoir, inflows consist of external inflows and the water from the upper reservoir. Thus, its reservoir level will increase at a higher speed when the upper reservoir discharges. Since reservoir discharges are not only affected by prices but also by external inflows, the upper reservoir cannot keep discharging at its maximum level as the inflow decreases, since there is insufficient water in the reservoir. Consequently, this reservoir hits its minimum reservoir level.

\begin{figure}[!htb]
	\centering
	\includegraphics[width=0.8\columnwidth]{./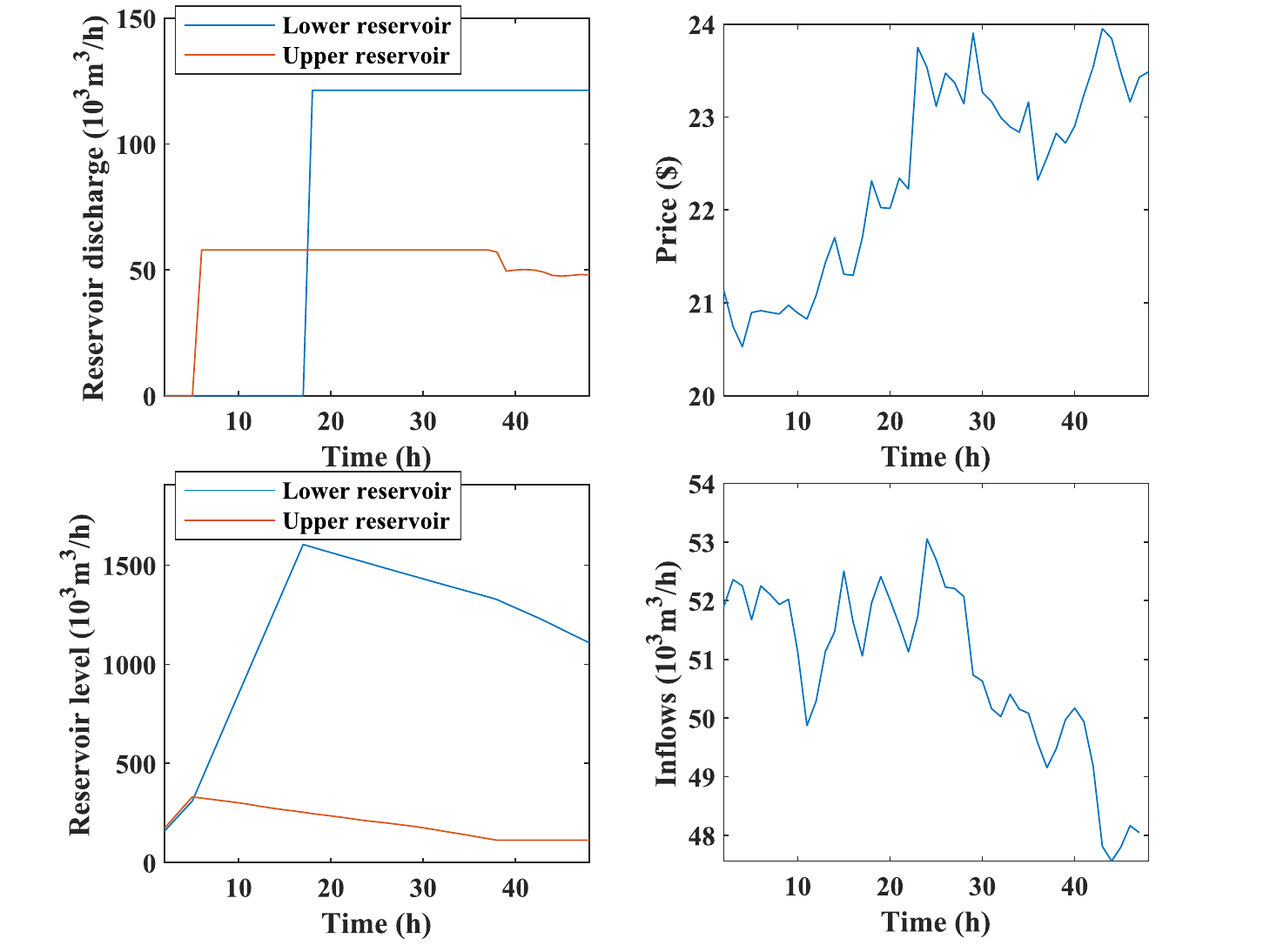}
	\caption{Reservoir discharge and reservoir level with respect to prices and external inflows for sample number 900.}
	\label{900}
\end{figure}

\begin{figure}[!htb]
	\centering
	\includegraphics[width=0.8\columnwidth]{./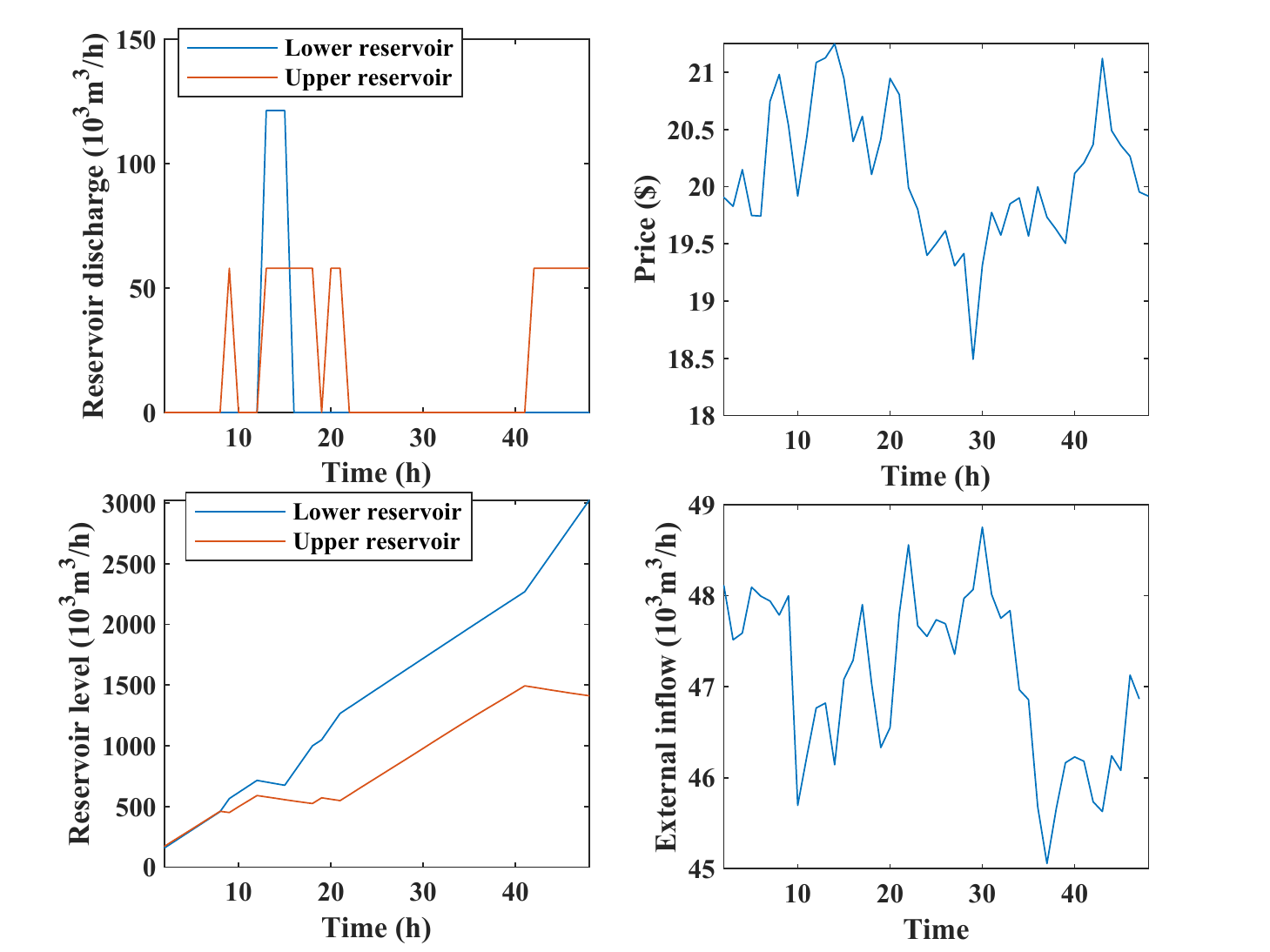}
	\caption{Reservoir discharge and reservoir level with respect to prices and external inflows for sample number 927.}
	\label{927}
\end{figure}

To quantify in-sample and out-of-sample performance of the solutions, we compute the average of the post-decision value at $t=0$ over all samples. We carry out this calculation for samples of both training and test data sets, to derive so-called the in-sample and out-of-sample values, respectively, see the second and third column of Table \ref{Table II}. The difference between the in-sample and out-of-sample values is on average 0.05\%, indicating stability of the profit estimation.

To further assess the quality of solutions, we compare the estimate from the ADP algorithm with the so-called wait-and-see value. A deterministic wait-and-see problem assumes perfect information is available and represented by a sample. The wait-and-see value is the expected value of having such perfect information, i.e.\ the average optimal value over all samples or wait-and-see problems. We use the samples of both test and training data set to compute the wait-and-see value, and solve the deterministic linear program (\ref{lp}) for each sample. Using the test data set, the value is equal to $7.7072 \times 10^4$ whereas this value varies for different number of training samples, e.g. it is $7.7052\times 10^4$ when using 5000 samples. It is observed that the difference between ADP and wait-and-see values is less than $2\%$.  

%To further assess the quality of solutions, we compare the estimate from the ADP algorithm with the so-called wait-and-see value. A deterministic wait-and-see problem assumes perfect information is available and represented by a sample. The wait-and-see value is the expected value of having such perfect information, i.e.\ the average optimal value over all samples or wait-and-see problems. We obtain the optimal value from the linear program \ref{lp} and use the samples of both training and test data sets to compute wait-and-see values. Using the test data set, the value is $7.7072 \times 10^4$ whereas it varies with the number of training samples, see the fourth column of Table \ref{Table II}. As observed, the in-sample value obtained from the ADP approximation provides an upper bound on the wait-and-see value, indicating there is a value of perfect information. On average, however, the difference between the upper bound and the wait-and-see value is only \color{red}$xx\%$\color{black}. Thus, our profit estimation produces high-quality solutions with . 

%\begin{table}[h!]
%\centering
%\caption{Estimates of the post-decision value obtained from the ADP and wait-and-see approaches.}
%\label{Table II}
%\begin{tabular}{c c c c c}
%\hline \hline
%Number of samples & In-sample ($\times10^4\$$) & Out-of-sample ($\times10^4\$$) & Wait-and-see ($\times10^4\$$)  \\\hline
%200 & 8.1077  & 8.1049  & 7.9268 \\
%500  & 7.7989 &    7.7982& 7.7455\\
%1000 & 7.8553  & 7.8540 & 7.7224 \\
%1500  & 7.7547 &    7.7541& 7.6894\\
%2000 & 7.7926  & 7.7870 & 7.7037\\
%\hline \hline 
%\end{tabular}
%\end{table}

\begin{table}[h!]
\centering
\caption{Estimates of the post-decision value obtained from the ADP approach.}
\label{Table II}
\begin{tabular}{c c c c c}
\hline \hline
Number of samples & In-sample ($\times10^4\$$) & Out-of-sample ($\times10^4\$$) & Diff. (\%)\\\hline
200 & 8.1077  & 8.1049 & 0.034 \\
%500  & 7.7989 &    7.7982\\
1000 & 7.8553  & 7.8540 & 0.165\\
%1500  & 7.7547 &    7.7541\\
2000 & 7.7926  & 7.7870 & 0.072 \\
3000 & 7.6529  &  7.6506 & 0.028\\
4000 & 7.5740  &  7.5730 & 0.012\\ 
5000 & 7.6536  &  7.6535 & 0.001\\
\hline \hline 
\end{tabular}
\end{table}

\begin{table}[h!]
\centering
\caption{Estimate of the post-decision value obtained from the ADP approach in case E.}
\label{Table II Case E}
\begin{tabular}{c c c c c c}
\hline \hline
Number of samples & In-sample ($\times10^4\$$) & Diff.($\times10^4\$$) & Out-of-sample ($\times10^4\$$) &  Diff.($\times10^4\$$) \\\hline
200 & 6.5672  &1.5405& 6.1643 & 1.9406 \\
100   &6.3932 &1.4621& 6.2399 & 1.6141\\
2000  &6.3853  &1.4073& 6.2415 & 1.5455\\
3000   &5.8910 &1.7619&  5.8680 & 1.7826\\
4000  &5.7555  &1.8185& 5.7352 & 1.8378\\
5000  &5.9697  &1.6839& 5.9544 & 1.6991\\
\hline \hline 
\end{tabular}
\end{table}

\subsection{Approximation}

To demonstrate the fact that profit accurate estimation not only depends on the current reservoir level but also on future inflows, we compare two cases wherein the term $\textbf{b}_{t}^T\boldsymbol\nu_{t}$ in the offline and online algorithms is excluded (E) and included (I), respectively. In the deterministic problem, i.e.\ considering only one sample, the $\hat V_0$ obtained from the ADP approach is $5.7580 \times10^4\$$ in case E while it is $8.2257\times10^4\$$  in case I. Similarly, for the stochastic problem, Table \ref{Table II Case E} demonstrates that profit estimation on average improve by 20\% when including inflows in the linear approximation. This shows how crucial it is to include future inflows in the estimation of the post-decision value.

%\color{blue} TRINE: How do the solutions look like in the case E? This would be interesting to see. Can you produce figures similar to Figs. 5 and 6?
\color{black}

\section{Case study of a reservoir network} \label{7}

To demonstrate the applicability of ADP to more complex systems, we proceed with a case study of the Swiss Kraftwerke Oberhasli AG; a hydro-power plant including multiple reservoirs connected in a network architecture \cite{Swiss}.

\subsection{Modeling}

The network architecture includes a number of reservoirs, possibly equipped with a power station and/or a pump. Water can be released to supply electricity at the wholesale market price or it can be pumped in the opposite direction by consuming power purchased from the market at the same price. Water releases from upstream reservoirs contribute to downstream inflows and pumping from downstream power stations results in upstream inflows. We extend the notation accordingly. 

Let $\boldsymbol{\pi}^d_{t}=(\pi^d_{1t},\dots,\pi^d_{|J|t})^T\in \mathbb{R}^{|J|}$ and $\boldsymbol{\pi}^c_{t}=(\pi^c_{1t},\dots,\pi^c_{|J|t})^T\in \mathbb{R}^{|J|}$ represent the charges and dischargess of the reservoirs during time period $t$ and $\textbf{f}_{t} =(f_{1t},\dots,f_{|\Gamma|t})^T\in \mathbb{R}^{|\Gamma|}$ be the water flow in the tunnels, where $\Gamma$ is the set of tunnels, i.e.\ interconnected pairs of reservoirs. The set of feasible decisions is defined by  
\begin{subequations}
\begin{align}
\Pi_{t+1}(\bar{\textbf{l}}_{t}+\boldsymbol{\nu}_{t},\boldsymbol{\nu}_{t+1})=\Big\{(\boldsymbol{\pi}^d_{t+1},\boldsymbol{\pi}^c_{t+1}):\ &\bar{\textbf{l}}_{t+1}=\bar{\textbf{l}}_{t}+\boldsymbol{\nu}_{t}-\boldsymbol{\pi}^d_{t+1}+\eta\boldsymbol{\pi}^c_{t+1},\label{c1} \\ &\textbf{l}^{min}\leq \bar{\textbf{l}}_{t+1}+\boldsymbol{\nu}_{t+1}\leq \textbf{l}^{max},\label{c2}\\ &\boldsymbol\pi^{min}\leq \boldsymbol{\pi}^d_{t+1},\boldsymbol{\pi}^c_{t+1}\leq \boldsymbol\pi^{max},\label{c3}\\[1mm]
&0\leq \textbf{f}_{t+1}\leq \textbf{f}^{max},\label{c4}\\
&\boldsymbol\pi_{t+1}^d=R^d\ \textbf{f}_{t+1} , \boldsymbol\pi_{t+1}^c=R^c\ \textbf{f}_{t+1} \label{c5}\Big\}
\end{align}
\end{subequations}
where the matrices $R^d\in \mathbb{R}^{|J|}\times\mathbb{R}^{|\Gamma|}$ and $R^c\in \mathbb{R}^{|J|}\times\mathbb{R}^{|\Gamma|}$ illustrate which reservoirs can be charged and discharged through which tunnels, i.e\ for $\gamma\in\Gamma$ and $j \in J$, $R^d_{j\gamma}=1$ if $\gamma=(j,k), j\in J^-(k)$, $R^c_{j\gamma}=1$ if $\gamma=(k,j), k\in J^+(j)$, and $R^d_{i\gamma}=R^d_{i\gamma}=0$, otherwise. $J^-(j)$ and $J^+(j)$ denote the reservoirs immediately upstream and downstream from reservoir $j$, respectively. In addition to the reservoir balances \eqref{c1}, capacity constraints \eqref{c2} and charging and discharging limits \eqref{c3}, we include the capacity limits of the tunnels \eqref{c4} and the connection of reservoirs and tunnels \eqref{c5}. In \eqref{c1}, $\eta$ denotes pumping deficiency. 

When upstream reservoir $j$ releases a water flow of $f_{(j,k)t}$ to reservoir $k$ through tunnel $(j,k)$ at time $t$, the turbines generate electricity with a conversion rate of $g_{(j,k)}$ to be sold at market price $\rho_t$. In contrast, if downstream reservoir $j$ pumps water $f_{(j,k)t}$ to $k$ through tunnel $(j,k)$, it consumes electricity with conversion rate $g_{(j,k)}$ which is bought from the market at price $\rho_t$. Thus, at each stage $t$, the post-decision value satisfies
\begin{align}
\label{connected_obj}
\bar V_t(\bar{\textbf{l}}_{t},\boldsymbol{\nu}_{[t]},\boldsymbol{\rho}_{[t]})=&\mathbb{E}\Big[\max_{(\boldsymbol{\pi}^d_{t+1},\boldsymbol{\pi}^c_{t+1})\in \Pi_{t+1}(\bar{\textbf{l}}_{t},\boldsymbol{\nu}_{t},\boldsymbol{\nu}_{t+1})}\Big\{\rho_{t+1}\textbf{g}^T\textbf{f}_{t+1}+\bar V_{t+1}(\bar{\textbf{l}}_{t+1},\boldsymbol{\nu}_{[t+1]},\boldsymbol{\rho}_{[t+1]})\Big\}\Big|\boldsymbol{\nu}_{[t]},\boldsymbol{\rho}_{[t]}\Big],
\end{align}
where
\begin{equation}
  \mathbf{g}= 
  \begin{cases}
      g_{(j,k)} &\quad k\in J^+(j),\\
      -g_{(j,k)} &\quad j\in J^-(k).
    \end{cases}\nonumber 
  \end{equation}

%Vector $\mathbf{g}\in \mathbb{R}^{|\Gamma|}$ denotes the amount of power required to pump one unit of water volume or generated by releasing one unit of water volume. Set $\Gamma^u$ represents the tunnels transferring water from upstream reservoirs to downstream reservoirs used for generation while set $\Gamma^d$ include the tunnels pumping water in the opposite direction. Note that $\Gamma=\Gamma_u \cup \Gamma_d$. 

\subsubsection{Results}

The larger test system here is an adapted version of the Swiss system of the Kraftwerke Oberhasli AG hydro-power plant, including six reservoirs, equipped with power stations and/or a pumps, and connected by five tunnels, as shown in Fig. \ref{Swiss Hydro}. Technical data for reservoirs and tunnels is provided in Tables \ref{Table III} and \ref{Table V}. We consider the same capacity of downstream and upstream tunnels. For both the releasing and pumping processes, we consider a generator and a pump with the same conversion rate and capacity limit. We set $\eta=0.6$. To generate training and test samples, we use the same ARMA models as in \eqref{ARMA inflows}-\eqref{ARMA prices}. Yet, we scale the inflows according to the capacity of reservoirs. 

\begin{figure*} [t]
\centering
\begin{minipage}{.5\textwidth}
  \centering
  	\scalebox{0.9}{
	\includegraphics[width=1\columnwidth]{./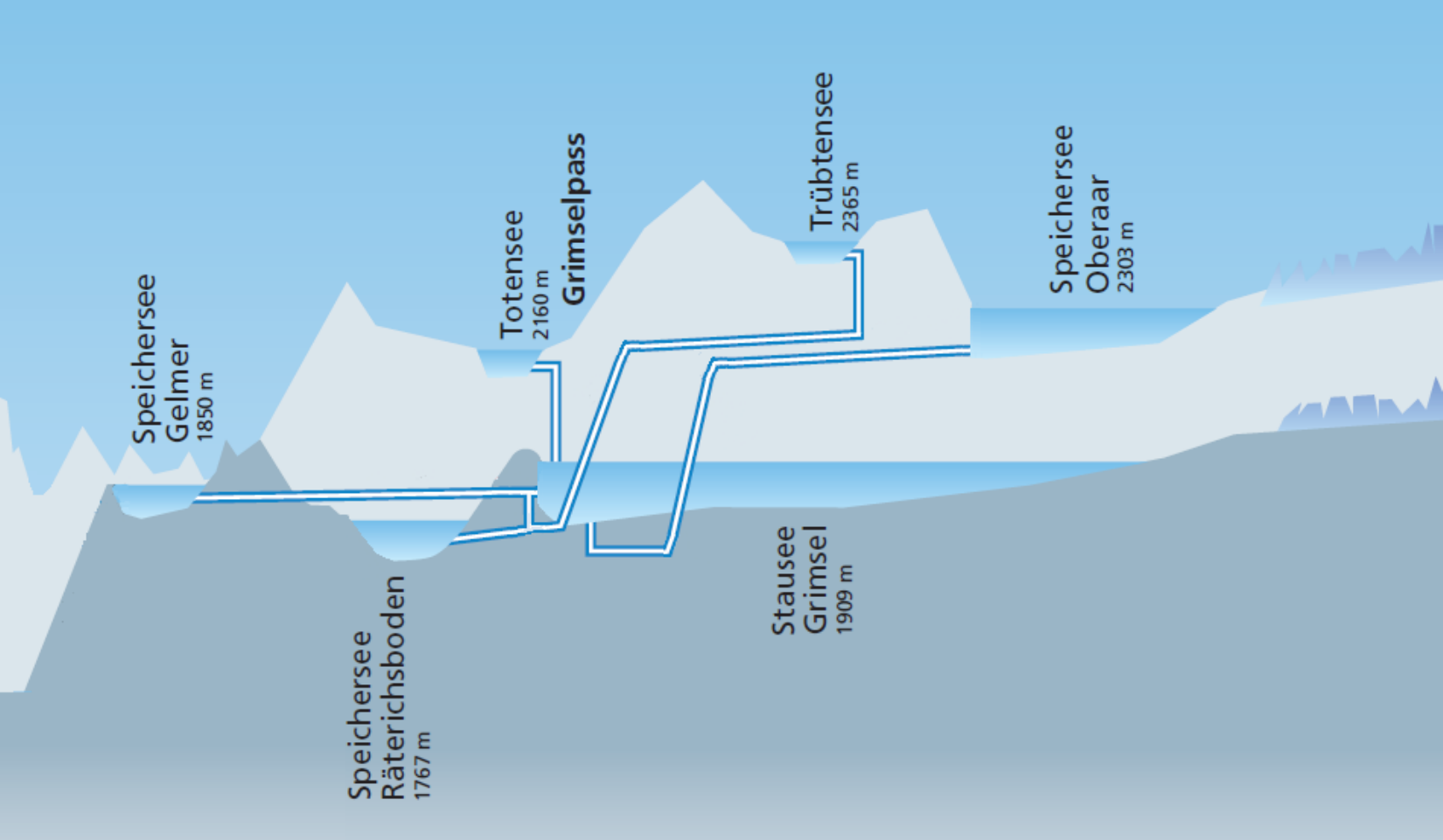}
	}
\end{minipage}%
\begin{minipage}{.5\textwidth}
  \centering
    	\scalebox{0.9}{
	\includegraphics[width=0.89\columnwidth]{./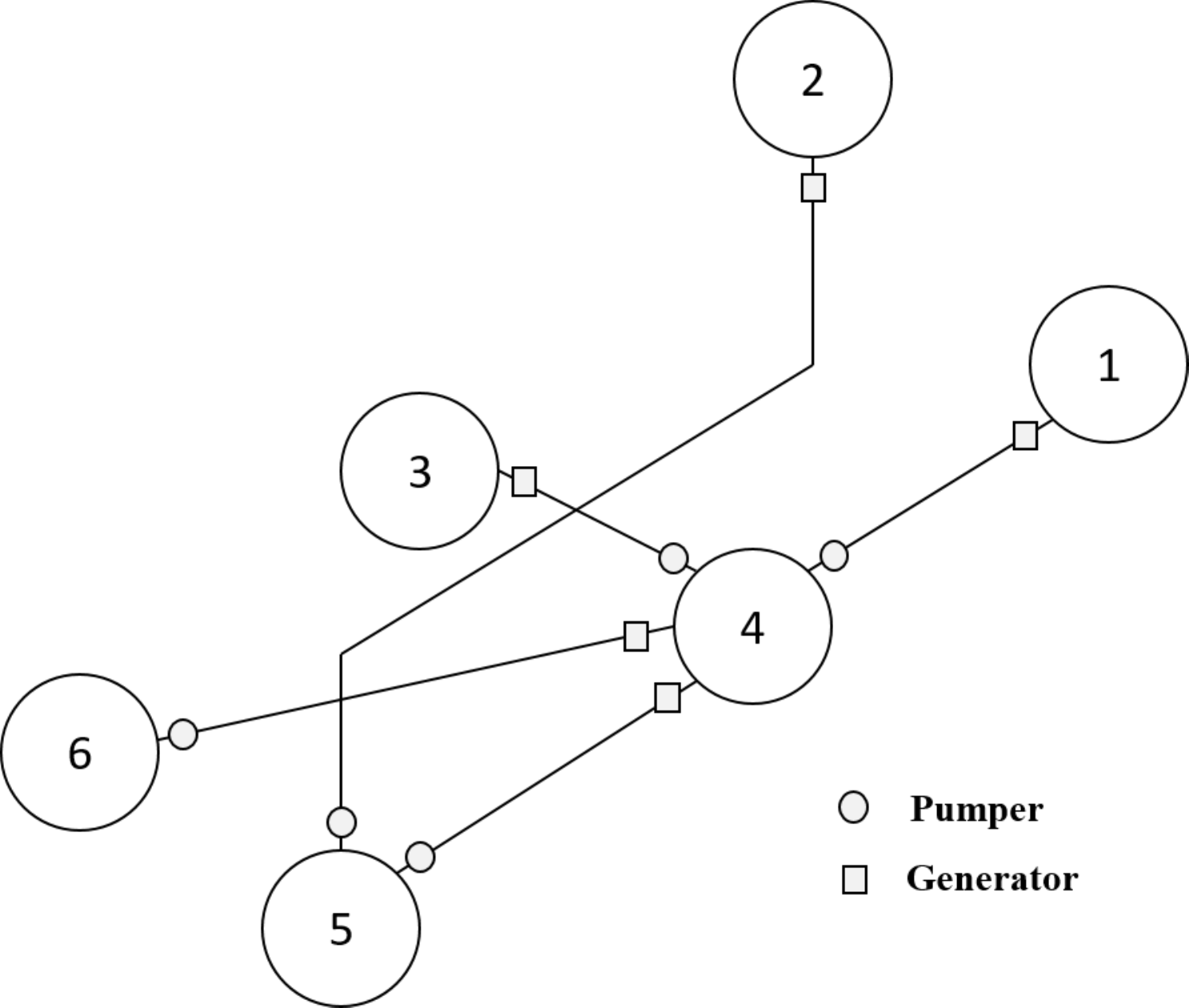}
	}
\end{minipage}
    \caption{The Kraftwerke Oberhasli AG (KWO) hydro power plant. }
\label{Swiss Hydro}
\end{figure*}

\begin{table}
\caption{Data for reservoirs.}
\label{Table III}
\centering
\begin{tabular}{c c c c c}
\hline \hline
\multirow{3}{*}{ {Reservoirs}} & Max reservoir & Max reservoir & Min reservoir& Initial \\ 
  & discharge  &capacity   &  capacity& reservoir level \\ 
  &  ($10^3m^3/h$)  &  ($10^3m^3$)  &  ($10^3m^3$)&  ($10^3m^3$)\\\hline
1   & 2.39 &  65.07  & 6.50 & 7.15 \\
2 & 0.11  & 1.14  & 0.11 & 0.12 \\
3 & 0.22  & 2.28  & 0.22 & 0.25 \\
4 & 3.02  & 107.30  & 10.73 & 11.80 \\
5 & 0.23  & 28.53  & 2.85 & 3.13 \\
6 & 1.10  & 1.14  & 0.11 & 0.12  \\
 \hline \hline  
\end{tabular}
\end{table}

\begin{table}[h!]
\caption{Data for tunnels.}
\label{Table V}
\centering
\begin{tabular}{c c c c c c}
\hline \hline
{Tunnels} & (1,4),(4,1) & (2,5),(5,2) & (3,4),(4,3) & (4,5),(5,4) & (4,6),(6,4) \\\hline
Conversion rate ($MWh/10^3m^3$) & 0.1 &  0.04  & 0.03 & 0.1 & 0.03 \\
Maximum capacity ($10^3m^3/h$) & 2.52  & 3.10  & 0.22 & 3.61 & 2.52 \\
 \hline \hline  
\end{tabular}
\end{table}

We run the proposed algorithm for 1000 training samples and a time horizon of 48 hours. The parameter $\alpha$ is set to $0.5$. Consistent with \eqref{connected_obj}, the estimate of the post-decision value in Step 1. (a) of Algorithm \ref{alg} is replaced by   
\begin{align}\label{online3}
\hat V_{t}^{n}({\textbf{l}}^n_{t},\boldsymbol{\nu}^n_{[t]},{\boldsymbol{\rho}}_{[t]}^n)=&\max_{(\boldsymbol{\pi}_{t+1}^d,\boldsymbol{\pi}_{t+1}^c)\in \Pi_{t+1}(\bar{\textbf{l}}_{t}^n+\boldsymbol{\nu}_{t}^n,\boldsymbol{\nu}_{t+1}^n)}\Big\{C_{t+1}(\mathbf{f}_{t+1},\bar{\textbf{l}}_{t}^n+\boldsymbol{\nu}_{t}^n,\boldsymbol{\nu}_{[t+1]}^n,\boldsymbol{\rho}_{[t+1]}^n)\!\nonumber\\
&+(\textbf a_{t+1}^{n-1})^T(\bar{\textbf{l}}_{t}^n+\boldsymbol{\nu}_{t}^n-\boldsymbol{\pi}_{t+1}^d+\eta\boldsymbol{\pi}_{t+1}^c)\Big\}+\bar V_{t+1}(\bar{\textbf{l}}_{t+1}^{n-1},\boldsymbol{\nu}_{[t+1]}^{n-1},\boldsymbol{\rho}_{[t+1]}^{n-1})\nonumber\\[2mm]
&-\textbf (\mathbf{a}_{t+1}^{n-1})^T\bar{\textbf{l}}_{t+1}^{n-1}+\textbf (\mathbf{b}_{t+1}^{n-1})^T(\boldsymbol{\nu}_{t+1}^n-\boldsymbol{\nu}_{t+1}^{n-1}). \nonumber
\end{align}
\color{black}
The other steps remain the same. 

The estimates of the post-decision value for last five samples out of 100 and 1000 samples, respectively, are illustrated in Fig.\ \ref{convergence_connected}. As for the demonstration example, the larger number of samples results in a better convergence. By increasing the number of training samples from 100 to 1000, the average standard deviation of last five samples decreases from $8.85\%$ to $3.95\%$. A running time of an hour allows for the use of approx. 300 samples.

 \begin{figure}[!htb]
	\centering
	\includegraphics[width=0.6\columnwidth]{./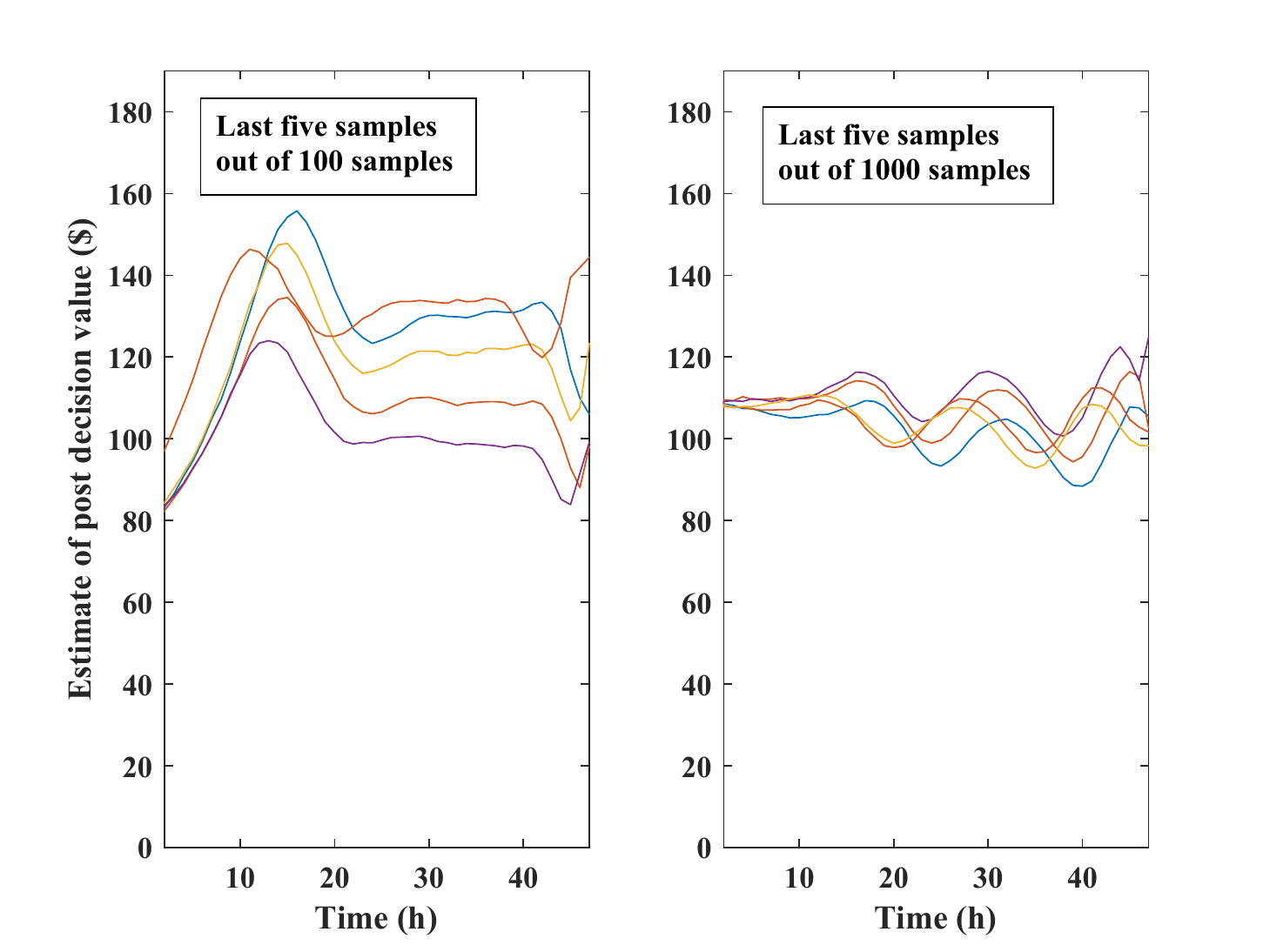}
	\caption{Estimate of post-decision values over 48 hours for the last five samples out of 100 and 1000 samples.}
	\label{convergence_connected}
\end{figure}

\begin{table}[h!]
\centering
\caption{Estimate of post-decision value obtained from the ADP approach.}
\label{Table VI}
\begin{tabular}{c c c c }
\hline \hline
Number of samples & In-sample (\$)  & Out-of-sample (\$)  \\\hline
100 & 114.5570 & 113.6344\\
1000  & 92.5732 & 92.3699 \\
\hline \hline 
\end{tabular}
\end{table}

\begin{table}[h!]
\centering
\caption{Estimate of post-decision value obtained from the ADP approach in case E.}
\label{Table VI Case E}
\begin{tabular}{c c c c }
\hline \hline
Number of samples & In-sample (\$)  & Out-of-sample (\$)  \\\hline
100 & 89.0566 & 88.7890 \\
1000  & 77.7650 & 76.8870\\
\hline \hline 
\end{tabular}
\end{table}
 Table \ref{Table VI} lists the in-sample and out-of-sample post-decision values for different numbers of samples. Even for the realistically sized case, the difference between in-sample and out-of-sample values remains less than 2\%. In another analysis, we compare the ADP value to the wait-and-see value, which results in an average difference of $8.64\%$.  As for the demonstration example, we finally consider two cases of E and I wherein the term $\textbf{b}_{t}^T\boldsymbol\nu_{t}$ in the offline and online algorithms is excluded and included, respectively. By comparing Tables \ref{Table VI} and \ref{Table VI Case E} it can be observed that the profit estimation improve on average by 18\% in case I.

\section{Conclusion} \label{8}
This paper proposes an approximate dynamic programming approach to estimate future profits of connected hydro reservoirs. To overcome dimensionality issues, we use the so-called post-decision state and a linear approximation architecture. We prove that when the time series of prices and inflows follow an autoregressive process, our approximation provides an upper bound on future profits. 

We assess the performance of our proposed model in terms of convergence and quality of solutions for a stylized systems of reservoirs in cascade and for a more realistic network of connected reservoirs. In both cases, we obtain convergence of the profit value in the sense that the average standard deviation of the last five iterations is less than 4\% with 1000 samples. Even for the realistically sized case, the linear approximation allows us to run our algorithm for 1000 samples within 2 hours. At the same time, the linear approximation is sufficient for solution quality, i.e.\ the difference between in-sample and out-of-sample values is only 2\%. Our results, however, demonstrate that the accurate estimation of the future profit depends on not only the current reservoir level but also on the estimation of future inflows.

\bibliography{mybib}

\end{document}